\newtheorem{thm}{Theorem}[section]
\newtheorem{cor}[thm]{Corollary}
\newtheorem{lem}[thm]{Lemma}
\newtheorem{defn}[thm]{Definition}
\newtheorem{rem}[thm]{\bf{Remark}}
\numberwithin{equation}{section}
\begin{document}

\title{\bf A note on modular forms and generalized anomaly cancellation formulas \uppercase\expandafter{\romannumeral 2}}
\author{Siyao Liu \hskip 0.4 true cm  Yong Wang$^{*}$}

\thanks{{\scriptsize
\hskip -0.4 true cm \textit{2010 Mathematics Subject Classification:}
58C20; 57R20; 53C80.
\newline \textit{Key words and phrases:} Modular invariance; Cancellation formulas.
\newline \textit{$^{*}$Corresponding author}}}

\maketitle

\begin{abstract}
 \indent In \cite{LW}, Liu and Wang generalized the Han-Liu-Zhang cancellation formulas to the $(a, b)$ type cancellation formulas. In this note, we prove some another $(a, b)$ type cancellation formulas for even-dimensional Riemannian manifolds. And by transgression, we obtain some characteristic forms with modularity properties on odd-dimensional manifolds.
\end{abstract}

\vskip 0.2 true cm

%------------------------------------------------------------------------------------%

\pagestyle{myheadings}
\markboth{\rightline {\scriptsize Liu}}
         {\leftline{\scriptsize Generalized cancellation formulas and modular forms}}

\bigskip
\bigskip

%------------------------------------------------------------------------------------%
%------------------------------------------------------------------------------------%

\section{ Introduction }

In \cite{AW}, Alvarez-Gaum\'{e} and Witten discovered the ``miraculous cancellation" formula for gravitational anomaly  as follow:
\begin{align}
\Big\{\widehat{L}(TM, \nabla^{TM})\Big\}^{(12)}=\Big\{\widehat{A}(TM, \nabla^{TM})ch(T_{\mathbb{C}}M, \nabla^{T_{\mathbb{C}}M})-32\widehat{A}(TM, \nabla^{TM})\Big\}^{(12)},\nonumber
\end{align}
where $T_{\mathbb{C}}M$ denotes the complexification of $TM$ and $\nabla^{T_{\mathbb{C}}M}$ is canonically induced from $\nabla^{TM},$ the Levi-Civita connection associated to the Riemannian structure of $M.$
This formula reveals a beautiful relation between the top components of the Hirzebruch $\widehat{L}$-form and $\widehat{A}$-form of a $12$-dimensional smooth Riemannian manifold $M.$
Liu established higher-dimensional ``miraculous cancellation" formulas for $(8k+4)$-dimensional Riemannian manifolds by developing modular invariance properties of characteristic forms \cite{L}.
Han and Zhang established a general cancellation formula that involves a complex line bundle for $(8k+4)$-dimensional smooth Riemannian manifold in \cite{HZ1,HZ2}.
For higher-dimensional smooth Riemannian manifolds the authors obtained some cancellation formulas in \cite{HH}.
In \cite{W1}, Wang proved more general cancellation formulas for $(8k+2)$ and $(8k+6)$-dimensional smooth Riemannian manifolds.
And in \cite{LW}, the authors generalized the Han-Liu-Zhang cancellation formulas to the $(a, b)$ type cancellation formulas.

Moreover, motivated by the Chern-Simons theory, Chen and Han computed the transgressed forms of some modularly invariant characteristic forms, which are related to the elliptic genera and studied the modularity properties of these secondary characteristic forms and relations among them \cite{CH}.
In \cite{W2}, Wang computed the transgressed forms of some modularly invariant characteristic forms related to the ``twisted" elliptic genera.
The motivation of this paper is to prove more $(a, b)$ type cancellation formulas.

A brief description of the organization of this paper is as follows.
In Section 2, we give some definitions and basic notions that we will use in this paper.
In the next section, we prove some generalized cancellation formulas for $4d$-dimensional Riemannian manifolds.
In Section 4, for even-dimensional Riemannian manifolds, we give some generalized cancellation formulas involving a complex line bundle.
Finally, in Section 5, we obtain some characteristic forms with modularity properties for odd-dimensional manifolds.

%------------------------------------------------------------------------------------%

\vskip 1 true cm

\section{ Characteristic forms and modular forms }

Firstly, we give some definitions and basic notions on characteristic forms and modular forms that will be used throughout the paper.
For the details, see \cite{A,H,Z}.

\vskip 0.3 true cm
2.1. \noindent{ Characteristic forms }
\vskip 0.3 true cm

Let $M$ be a Riemannian manifold, $\nabla^{TM}$ be the associated Levi-Civita connection on $TM$ and $R^{TM}=(\nabla^{TM})^{2}$ be the curvature of $\nabla^{TM}.$
According to the detailed descriptions in \cite{Z}, let $\widehat{A}(TM, \nabla^{TM})$ and $\widehat{L}(TM, \nabla^{TM})$ be the Hirzebruch characteristic forms defined respectively by
\begin{align}
\widehat{A}(TM, \nabla^{TM})&=\det\nolimits^{\frac{1}{2}}\left(\frac{\frac{\sqrt{-1}}{4\pi}R^{TM}}{\sinh(\frac{\sqrt{-1}}{4\pi}R^{TM})}\right).
\end{align}

Let $E, F$ be two Hermitian vector bundles over $M$ carrying Hermitian connection $\nabla^{E}, \nabla^{F}$ respectively.
Let $R^{E}=(\nabla^{E})^{2}$ (resp. $R^{F}=(\nabla^{F})^{2}$) be the curvature of $\nabla^{E}$ (resp. $\nabla^{F}$).
If we set the formal difference $G=E-F,$ then $G$ carries an induced Hermitian connection $\nabla^{G}$ in an obvious sense.
We define the associated Chern character form as
\begin{align}
\text{ch}(G, \nabla^{G})=\text{tr}\left[\exp\left(\frac{\sqrt{-1}}{2\pi}R^{E}\right)\right]-\text{tr}\left[\exp\left(\frac{\sqrt{-1}}{2\pi}R^{F}\right)\right].
\end{align}

For any complex number $t,$ let
\begin{align}
\wedge_{t}(E)&=\mathbb{C}|_{M}+tE+t^{2}\wedge^{2}(E)+\cdot\cdot\cdot,\\
S_{t}(E)&=\mathbb{C}|_{M}+tE+t^{2}S^{2}(E)+\cdot\cdot\cdot,
\end{align}
denote respectively the total exterior and symmetric powers of $E,$ which live in $K(M)[[t]].$
The following relations between these operations hold,
\begin{align}
S_{t}(E)=\frac{1}{\wedge_{-t}(E)},~~~~ \wedge_{t}(E-F)=\frac{\wedge_{t}(E)}{\wedge_{t}(F)}.
\end{align}
Moreover, if $\{ \omega_{i} \}, \{ \omega'_{j} \}$ are formal Chern roots for Hermitian vector bundles $E, F$ respectively, then
\begin{align}
\text{ch}(\wedge_{t}(E))=\prod_{i}(1+e^{\omega_{i}}t).
\end{align}
Then we have the following formulas for Chern character forms,
\begin{align}
\text{ch}(S_{t}(E))=\frac{1}{\prod\limits_{i}(1-e^{\omega_{i}}t)},~~~ \text{ch}(\wedge_{t}(E-F))=\frac{\prod\limits_{i}(1+e^{\omega_{i}}t)}{\prod\limits_{j}(1+e^{\omega'_{j}}t)}.
\end{align}

If $W$ is a real Euclidean vector bundle over $M$ carrying a Euclidean connection $\nabla^{W},$ then its complexification $W_{\mathbb{C}}=W\otimes \mathbb{C}$ is a complex vector bundle over $M$ carrying a canonical induced Hermitian metric from that of $W,$ as well as a Hermitian connection $\nabla^{W_{\mathbb{C}}}$ induced from $\nabla^{W}.$
If $E$ is a vector bundle (complex or real) over $M,$ set $\widetilde{E}=E-\dim E$ in $K(M)$ or $KO(M).$

\vskip 0.3 true cm
2.2. \noindent{ Some properties about the Jacobi theta functions and modular forms }
\vskip 0.3 true cm

Refer to \cite{C}, we recall four Jacobi theta functions are defined as follows:
\begin{align}
&\theta(v, \tau)=2q^{\frac{1}{8}}\sin(\pi v)\prod^{\infty}_{j=1}[(1-q^{j})(1-e^{2\pi\sqrt{-1}v}q^{j})(1-e^{-2\pi\sqrt{-1}v}q^{j})];\\
&\theta_{1}(v, \tau)=2q^{\frac{1}{8}}\cos(\pi v)\prod^{\infty}_{j=1}[(1-q^{j})(1+e^{2\pi\sqrt{-1}v}q^{j})(1+e^{-2\pi\sqrt{-1}v}q^{j})];\\
&\theta_{2}(v, \tau)=\prod^{\infty}_{j=1}[(1-q^{j})(1-e^{2\pi\sqrt{-1}v}q^{j-\frac{1}{2}})(1-e^{-2\pi\sqrt{-1}v}q^{j-\frac{1}{2}})];\\
&\theta_{3}(v, \tau)=\prod^{\infty}_{j=1}[(1-q^{j})(1+e^{2\pi\sqrt{-1}v}q^{j-\frac{1}{2}})(1+e^{-2\pi\sqrt{-1}v}q^{j-\frac{1}{2}})],
\end{align}
where $q=e^{2\pi\sqrt{-1}\tau}$ with $\tau\in\mathbb{H},$ the upper half complex plane.

Let
\begin{align}
\theta'(0, \tau)=\frac{\partial \theta(v, \tau)}{\partial v}\Big|_{v=0}.
\end{align}
Then the following Jacobi identity holds,
\begin{align}
\theta'(0, \tau)=\pi\theta_{1}(0, \tau)\theta_{2}(0, \tau)\theta_{3}(0, \tau).
\end{align}

In what follows,
\begin{align}
SL_{2}(\mathbb{Z})=\Big\{ \Big( \begin{array}{cc}
a & b\\
c & d
\end{array}
\Big)\Big|a, b, c, d\in\mathbb{Z}, ad-bc=1
\Big \}
\end{align}
stands for the modular group.
Write $S=\Big(\begin{array}{cc}
0 & -1\\
1 & 0
\end{array}\Big),
T=\Big(\begin{array}{cc}
1 & 1\\
0 & 1
\end{array}\Big)$
be the two generators of $SL_{2}(\mathbb{Z}).$
We choose the sign such that for $\tau=\sqrt{-1}, \big(\frac{\tau}{\sqrt{-1}}\big)^{\frac{1}{2}}=+1.$
They act on $\mathbb{H}$ by $S\tau=-\frac{1}{\tau}, T\tau=\tau+1.$
One has the following transformation laws of theta functions under the actions of $S$ and $T$ (see \cite{L,CH}):
\begin{align}
&\theta(v, \tau+1)=e^{\frac{\pi\sqrt{-1}}{4}}\theta(v, \tau),~~~~ \theta(v, -\frac{1}{\tau})=\frac{1}{\sqrt{-1}}\Big(\frac{\tau}{\sqrt{-1}}\Big)^{\frac{1}{2}}e^{\pi\sqrt{-1}\tau v^{2}}\theta(\tau v, \tau);\\
&\theta_{1}(v, \tau+1)=e^{\frac{\pi\sqrt{-1}}{4}}\theta_{1}(v, \tau),~~~~ \theta_{1}(v, -\frac{1}{\tau})=\Big(\frac{\tau}{\sqrt{-1}}\Big)^{\frac{1}{2}}e^{\pi\sqrt{-1}\tau v^{2}}\theta_{2}(\tau v, \tau);\\
&\theta_{2}(v, \tau+1)=\theta_{3}(v, \tau),~~~~ \theta_{2}(v, -\frac{1}{\tau})=\Big(\frac{\tau}{\sqrt{-1}}\Big)^{\frac{1}{2}}e^{\pi\sqrt{-1}\tau v^{2}}\theta_{1}(\tau v, \tau);\\
&\theta_{3}(v, \tau+1)=\theta_{2}(v, \tau),~~~~ \theta_{3}(v, -\frac{1}{\tau})=\Big(\frac{\tau}{\sqrt{-1}}\Big)^{\frac{1}{2}}e^{\pi\sqrt{-1}\tau v^{2}}\theta_{3}(\tau v, \tau).
\end{align}
Differentiating the above transformation formulas, we get that
\begin{align}
&\theta'(v, \tau+1)=e^{\frac{\pi\sqrt{-1}}{4}}\theta'(v, \tau),\\
&\theta'(v, -\frac{1}{\tau})=\frac{1}{\sqrt{-1}}\Big(\frac{\tau}{\sqrt{-1}}\Big)^{\frac{1}{2}}e^{\pi\sqrt{-1}\tau v^{2}}(2\pi\sqrt{-1}\tau v\theta(\tau v, \tau)+\tau\theta'(\tau v, \tau));\nonumber\\
&\theta_{1}'(v, \tau+1)=e^{\frac{\pi\sqrt{-1}}{4}}\theta_{1}'(v, \tau),\\
&\theta_{1}'(v, -\frac{1}{\tau})=\Big(\frac{\tau}{\sqrt{-1}}\Big)^{\frac{1}{2}}e^{\pi\sqrt{-1}\tau v^{2}}(2\pi\sqrt{-1}\tau v\theta_{2}(\tau v, \tau)+\tau\theta'_{2}(\tau v, \tau));\nonumber\\
&\theta'_{2}(v, \tau+1)=\theta'_{3}(v, \tau),\\
&\theta'_{2}(v, -\frac{1}{\tau})=\Big(\frac{\tau}{\sqrt{-1}}\Big)^{\frac{1}{2}}e^{\pi\sqrt{-1}\tau v^{2}}(2\pi\sqrt{-1}\tau v\theta_{1}(\tau v, \tau)+\tau\theta'_{1}(\tau v, \tau));\nonumber\\
&\theta'_{3}(v, \tau+1)=\theta'_{2}(v, \tau),\\
&\theta'_{3}(v, -\frac{1}{\tau})=\Big(\frac{\tau}{\sqrt{-1}}\Big)^{\frac{1}{2}}e^{\pi\sqrt{-1}\tau v^{2}}(2\pi\sqrt{-1}\tau v\theta_{3}(\tau v, \tau)+\tau\theta'_{3}(\tau v, \tau)).\nonumber
\end{align}
Therefore
\begin{align}
\theta'(0, -\frac{1}{\tau})=\frac{1}{\sqrt{-1}}\Big(\frac{\tau}{\sqrt{-1}}\Big)^{\frac{1}{2}}\tau\theta'(0, \tau).
\end{align}

\begin{defn}
A modular form over $\Gamma,$ a modular subgroup of $SL_{2}(\mathbb{Z}),$ is a holomorphic function $f(\tau)$ on $\mathbb{H}$ such that
\begin{align}
f(g\tau):=f\Big(\frac{a\tau+b}{c\tau+d}\Big)=\chi(g)(c\tau+d)^{k}f(\tau),~\forall g=\Big(\begin{array}{cc}
a & b\\
c & d
\end{array}\Big)\in\Gamma,
\end{align}
where $\chi:\Gamma\rightarrow\mathbb{C}^{*}$ is a character of $\Gamma,$ $k$ is called the weight of $f.$
\end{defn}

Let
\begin{align}
&\Gamma_{0}(2)=\Big\{ \Big( \begin{array}{cc}
a & b\\
c & d
\end{array}
\Big)\in SL_{2}(\mathbb{Z})\Big|c\equiv0(\text{mod}~2)
\Big \},\\
&\Gamma^{0}(2)=\Big\{ \Big( \begin{array}{cc}
a & b\\
c & d
\end{array}
\Big)\in SL_{2}(\mathbb{Z})\Big|b\equiv0(\text{mod}~2)
\Big \},\\
&\Gamma_{\theta}=\Big\{ \Big( \begin{array}{cc}
a & b\\
c & d
\end{array}
\Big)\in SL_{2}(\mathbb{Z})\Big|\Big( \begin{array}{cc}
a & b\\
c & d
\end{array}
\Big)\equiv\Big( \begin{array}{cc}
1 & 0\\
0 & 1
\end{array}
\Big) \text{or} \Big( \begin{array}{cc}
0 & 1\\
1 & 0
\end{array}
\Big)(\text{mod}~2)
\Big \}
\end{align}
be the three modular subgroups of $SL_{2}(\mathbb{Z}).$
It is known that the generators of $\Gamma_{0}(2)$ are $T, ST^{2}ST,$ the generators of $\Gamma^{0}(2)$ are $STS, T^{2}STS$ and the generators of $\Gamma_{\theta}$ are $ S, T^{2}.$

Let $E_{2}(\tau)$ be the Eisenstein series which is a quasimodular form over $SL_{2}(\mathbb{Z}),$ satisfying
\begin{align}
E_{2}\Big(\frac{a\tau+b}{c\tau+d}\Big)=(c\tau+d)^{2}E_{2}(\tau)-\frac{6\sqrt{-1}c(c\tau+d)}{\pi}.
\end{align}
In particular, we have
\begin{align}
&E_{2}(\tau+1)=E_{2}(\tau),\\
&E_{2}\Big(-\frac{1}{\tau}\Big)=\tau^{2}E_{2}(\tau)-\frac{6\sqrt{-1}\tau}{\pi}.
\end{align}

If $\Gamma$ is a modular subgroup, let $\mathcal{M}_{\mathbb{R}}(\Gamma)$ denote the ring of modular forms over $\Gamma$ with real Fourier coefficients.
Writing $\theta_{j}=\theta_{j}(0,\tau), 1\leq j\leq3,$ we introduce six explicit modular forms,
\begin{align}
&\delta_{1}(\tau)=\frac{1}{8}(\theta^{4}_{2}+\theta^{4}_{3}),~~~~\varepsilon_{1}(\tau)=\frac{1}{16}\theta^{4}_{2}\theta^{4}_{3};\\
&\delta_{2}(\tau)=-\frac{1}{8}(\theta^{4}_{1}+\theta^{4}_{3}),~~~~\varepsilon_{2}(\tau)=\frac{1}{16}\theta^{4}_{1}\theta^{4}_{3};\\
&\delta_{3}(\tau)=\frac{1}{8}(\theta^{4}_{1}-\theta^{4}_{2}),~~~~\varepsilon_{3}(\tau)=-\frac{1}{16}\theta^{4}_{1}\theta^{4}_{2}.
\end{align}
They have the following Fourier expansions in $q^{\frac{1}{2}}$:
\begin{align}
&\delta_{1}(\tau)=\frac{1}{4}+6q+\cdot\cdot\cdot,~~~~\varepsilon_{1}(\tau)=\frac{1}{16}-q+\cdot\cdot\cdot;\\
&\delta_{2}(\tau)=-\frac{1}{8}-3q^{\frac{1}{2}}+\cdot\cdot\cdot,~~~~\varepsilon_{2}(\tau)=q^{\frac{1}{2}}+\cdot\cdot\cdot;\\
&\delta_{3}(\tau)=-\frac{1}{8}+3q^{\frac{1}{2}}+\cdot\cdot\cdot,~~~~\varepsilon_{3}(\tau)=-q^{\frac{1}{2}}+\cdot\cdot\cdot,
\end{align}
where the $``\cdot\cdot\cdot"$ terms are the higher degree terms, all of which have integral coefficients.
They also satisfy the transformation laws,
\begin{align}
&\delta_{2}\Big(-\frac{1}{\tau}\Big)=\tau^{2}\delta_{1}(\tau),~~~~\varepsilon_{2}\Big(-\frac{1}{\tau}\Big)=\tau^{4}\varepsilon_{1}(\tau);\\
&\delta_{2}(\tau+1)=\delta_{3}(\tau),~~~~\varepsilon_{2}(\tau+1)=\varepsilon_{3}(\tau).
\end{align}

\begin{lem}(\cite{L})
$\delta_{1}(\tau)$ (resp. $\varepsilon_{1}(\tau)$) is a modular form of weight 2 (resp. 4) over $\Gamma_{0}(2),$ $\delta_{2}(\tau)$ (resp. $\varepsilon_{2}(\tau)$) is a modular form of weight 2 (resp. 4) over $\Gamma^{0}(2),$ while $\delta_{3}(\tau)$ (resp. $\varepsilon_{3}(\tau)$) is a modular form of weight 2 (resp. 4) over $\Gamma_{\theta},$ and moreover $\mathcal{M}_{\mathbb{R}}(\Gamma^{0}(2))=\mathbb{R}[\delta_{2}(\tau), \varepsilon_{2}(\tau)].$
\end{lem}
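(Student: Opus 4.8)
The plan is to verify every assertion at the level of the theta constants $\theta_j := \theta_j(0,\tau)$ and then promote the statements to the three index-three subgroups through their generators. First I would set $v=0$ in the $S$- and $T$-transformation laws of the four theta functions. Since $\theta(0,\tau)=0$, only $\theta_1,\theta_2,\theta_3$ survive, and one reads off that under $T$ one has $\theta_1\mapsto e^{\pi\sqrt{-1}/4}\theta_1$ and $\theta_2\leftrightarrow\theta_3$, while under $S$ one has $\theta_3\mapsto(\tau/\sqrt{-1})^{1/2}\theta_3$ and $\theta_1\leftrightarrow\theta_2$, each up to the common factor $(\tau/\sqrt{-1})^{1/2}$. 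Passing to fourth powers removes the eighth-root-of-unity ambiguity: under $T$ one gets $\theta_1^4\mapsto-\theta_1^4$ and $\theta_2^4\leftrightarrow\theta_3^4$; under $S$ one gets $\theta_j^4(-\tfrac1\tau)=-\tau^2\theta_{\sigma(j)}^4(\tau)$, where $\sigma$ is the transposition of the indices $1,2$ fixing $3$, since $(\tau/\sqrt{-1})^2=-\tau^2$.

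Next I would substitute these into the definitions of $\delta_i,\varepsilon_i$. A direct computation yields the $S$-laws $\delta_3(-\tfrac1\tau)=\tau^2\delta_3(\tau)$ and $\varepsilon_3(-\tfrac1\tau)=\tau^4\varepsilon_3(\tau)$ with trivial multiplier, together with the $T$-relations $\delta_2(\tau+1)=\delta_3(\tau)$ and $\delta_3(\tau+1)=\delta_2(\tau)$ (and likewise for the $\varepsilon$'s), whence $\delta_3(\tau+2)=\delta_3(\tau)$ and $\varepsilon_3(\tau+2)=\varepsilon_3(\tau)$. Because $\Gamma_\theta=\langle S,T^2\rangle$, this already shows that $\delta_3$ (resp. $\varepsilon_3$) is a holomorphic modular form of weight $2$ (resp. $4$) over $\Gamma_\theta$ with trivial character. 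The remaining two cases follow by equivariance: $\Gamma_0(2),\Gamma^0(2),\Gamma_\theta$ form a single conjugacy class of index-three subgroups, and the $SL_2(\mathbb{Z})$-action through its quotient onto $S_3$ permutes the triple $\{\delta_1,\delta_2,\delta_3\}$ compatibly with its permutation of the three subgroups. Since the already-recorded relations $\delta_2(-\tfrac1\tau)=\tau^2\delta_1(\tau)$, $\varepsilon_2(-\tfrac1\tau)=\tau^4\varepsilon_1(\tau)$ and the $T$-relations exhibit $\delta_1,\delta_2$ (resp. $\varepsilon_1,\varepsilon_2$) as images of $\delta_3$ (resp. $\varepsilon_3$) under this action, modularity over $\Gamma_0(2)$ and $\Gamma^0(2)$ with the stated weights is immediate. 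Holomorphy on $\mathbb{H}$ and at the cusps is read off from the infinite-product formulas and the stated $q^{1/2}$-expansions, which also show the Fourier coefficients are real.

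For the final assertion $\mathcal{M}_{\mathbb{R}}(\Gamma^0(2))=\mathbb{R}[\delta_2(\tau),\varepsilon_2(\tau)]$ I would argue in two steps. Algebraic independence of $\delta_2$ and $\varepsilon_2$ follows from a weight-graded leading-term comparison of the expansions $\delta_2=-\tfrac18-3q^{1/2}+\cdots$ and $\varepsilon_2=q^{1/2}+\cdots$: a nontrivial homogeneous polynomial relation would force a vanishing $q^{1/2}$-series with incompatible leading orders. Surjectivity is the dimension count. The modular curve $X^0(2)$ has genus $0$, two cusps, one elliptic point of order two and no elliptic point of order three, so the standard dimension formula gives $\dim_{\mathbb{C}}M_k(\Gamma^0(2))=1+\lfloor k/4\rfloor$ for even $k\geq0$. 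On the other hand the monomials $\delta_2^{a}\varepsilon_2^{b}$ of weight $k=2a+4b$ are indexed by the solutions of $a+2b=k/2$, of which there are exactly $1+\lfloor k/4\rfloor$. Matching the two counts in each even weight shows that $\mathbb{R}[\delta_2,\varepsilon_2]$ fills out the whole ring.

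The main obstacle is precisely this last dimension count. The theta-function transformations in the first two paragraphs are mechanical, but identifying $\dim_{\mathbb{C}}M_k(\Gamma^0(2))$ requires the geometry of the modular curve $X^0(2)$, namely its genus, cusp count and elliptic data, fed into the Riemann--Roch/valence formula. That is where the genuine content of the structure theorem resides, whereas establishing that $\delta_2,\varepsilon_2$ are modular and algebraically independent is comparatively routine.
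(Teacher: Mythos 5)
The paper offers no proof of this lemma to compare against: it is imported verbatim from Liu \cite{L} and used as a black box, so what you have written is a reconstruction of the proof from the cited literature. Your reconstruction is correct, and it is essentially the standard argument. The fourth-power transformation laws are right (including the sign from $(\tau/\sqrt{-1})^{2}=-\tau^{2}$); checking the automorphy only on the generators $S,T^{2}$ of $\Gamma_{\theta}$ is legitimate because the weight-$k$ factor $(c\tau+d)^{k}$ is a cocycle and $S^{2}=-I$ causes no trouble in even weight; and the transport step is exactly the clean way to do the other two cases: $\delta_{2}=\delta_{3}|_{2}T^{-1}$ is modular over $T\Gamma_{\theta}T^{-1}=\Gamma^{0}(2)$ and $\delta_{1}=\delta_{2}|_{2}S$ over $S\Gamma^{0}(2)S^{-1}=\Gamma_{0}(2)$, likewise for the $\varepsilon$'s in weight $4$. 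The structure statement via the valence formula is also the standard route: genus $0$, two cusps, $\nu_{2}=1$, $\nu_{3}=0$ give $\dim_{\mathbb{C}}M_{k}(\Gamma^{0}(2))=1+\lfloor k/4\rfloor$ for even $k$, which matches the count of monomials $\delta_{2}^{a}\varepsilon_{2}^{b}$ with $2a+4b=k$, and your leading-term comparison ($\delta_{2}^{a}\varepsilon_{2}^{b}=(-\tfrac18)^{a}q^{b/2}+\cdots$ with pairwise distinct orders) correctly gives their linear, hence algebraic, independence.

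Two small points deserve explicit mention, though neither is a genuine gap. First, holomorphy at the cusps of $\Gamma^{0}(2)$ requires both cusps: the displayed $q^{\frac12}$-expansions handle $\infty$, while the cusp $0$ is handled by your own relations $\delta_{2}|_{2}S=\delta_{1}$, $\varepsilon_{2}|_{4}S=\varepsilon_{1}$, whose expansions are holomorphic; this is implicit in your permutation picture but should be said. Second, the assertion concerns $\mathcal{M}_{\mathbb{R}}(\Gamma^{0}(2))$: the dimension count shows the monomials span over $\mathbb{C}$, and you should add that the same triangular leading-term structure, having real entries, lets one solve for the coefficients of any form with real Fourier coefficients inside $\mathbb{R}$, which is what upgrades the spanning statement to the ring equality over $\mathbb{R}$ rather than $\mathbb{C}$.
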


Let $E$ be a vector bundle and $f$ be a power series with constant term $1.$
Let $\nabla_{t}^{E}$ be deformed connection given by $\nabla_{t}^{E}=(1-t)\nabla_{0}^{E}+t\nabla_{1}^{E}$ and $R^{E}_{t}, t\in [0,1],$ denote the curvature of $\nabla_{t}^{E}.$ $f'(t)$ is the power series obtained from the derivative of $f(x)$ with respect to $x.$
$\omega$ is a closed form.
Then we have

\begin{lem}(\cite{CH})
\begin{align}
\det\nolimits^{\frac{1}{2}}(f(R^{E}_{1}))\omega-\det\nolimits^{\frac{1}{2}}(f(R^{E}_{0}))\omega=d\int_0^1\frac{1}{2}\det\nolimits^{\frac{1}{2}}(f(R^{E}_{t}))\omega tr\Big[\frac{d\nabla_{t}^{E}}{dt}\frac{f'(R^{E}_{t})}{f(R^{E}_{t})}\Big]dt.
\end{align}
\end{lem}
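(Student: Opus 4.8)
The plan is to realize the right-hand side as $d$ of a $t$-antiderivative of the left-hand difference, and then to apply the fundamental theorem of calculus in the parameter $t$. First I would rewrite the half-determinant as the exponential of a trace,
\begin{align}
\det\nolimits^{\frac{1}{2}}(f(R_t^{E}))=\exp\Big(\tfrac{1}{2}\mathrm{tr}\,\log f(R_t^{E})\Big),\nonumber
\end{align}
which is legitimate because $f$ has constant term $1$, so $\log f$ is a well-defined power series with vanishing constant term. Differentiating in $t$ and using $\frac{d}{dx}\log f(x)=f'(x)/f(x)$ gives
\begin{align}
\frac{d}{dt}\det\nolimits^{\frac{1}{2}}(f(R_t^{E}))=\tfrac{1}{2}\det\nolimits^{\frac{1}{2}}(f(R_t^{E}))\,\mathrm{tr}\Big[\frac{f'(R_t^{E})}{f(R_t^{E})}\frac{dR_t^{E}}{dt}\Big],\nonumber
\end{align}
where the cyclicity of the trace is harmless since $R_t^{E}$ and $f'(R_t^{E})/f(R_t^{E})$ are commuting even-degree forms.

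Next I would invoke the standard variational identity for curvature. Writing $A_t=\frac{d\nabla_t^{E}}{dt}=\nabla_1^{E}-\nabla_0^{E}$, a tensorial $\mathrm{End}(E)$-valued $1$-form, a local computation with the connection form shows $\frac{dR_t^{E}}{dt}=d^{\nabla_t}A_t$, the covariant exterior derivative of $A_t$. The crucial step is then to exhibit the right-hand side as exact. Since $\det\nolimits^{\frac{1}{2}}(f(R_t^{E}))$ is a Chern-Weil characteristic form it is closed, while the Bianchi identity $d^{\nabla_t}R_t^{E}=0$ forces $d^{\nabla_t}\big(f'(R_t^{E})/f(R_t^{E})\big)=0$ because the latter is a power series in $R_t^{E}$. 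Combining these with the identity $d\,\mathrm{tr}(S)=\mathrm{tr}(d^{\nabla_t}S)$ and the graded Leibniz rule (with no sign, as $f'(R_t^{E})/f(R_t^{E})$ is of even degree) yields
\begin{align}
\frac{d}{dt}\det\nolimits^{\frac{1}{2}}(f(R_t^{E}))=\tfrac{1}{2}\,d\Big(\det\nolimits^{\frac{1}{2}}(f(R_t^{E}))\,\mathrm{tr}\Big[\frac{d\nabla_t^{E}}{dt}\frac{f'(R_t^{E})}{f(R_t^{E})}\Big]\Big).\nonumber
\end{align}

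Finally, since $\omega$ is closed I may carry it inside the differential, and integrating in $t$ over $[0,1]$ lets me apply the fundamental theorem of calculus on the left while pulling $d$ out of the $t$-integral on the right, which is exactly the asserted formula. The only genuine inputs are the variational formula $\frac{dR_t^{E}}{dt}=d^{\nabla_t}A_t$ and the Bianchi identity; the main point requiring care is the sign bookkeeping in the graded Leibniz rule and in the cyclic invariance of the trace for endomorphism-valued forms, but once the even degree of $R_t^{E}$ and of $f'/f$ evaluated at it is used, every sign is trivial, so no real obstacle remains.
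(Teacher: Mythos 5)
Your proof is correct, and it takes essentially the same route as the source: the paper itself offers no proof of this lemma, quoting it directly from Chen and Han \cite{CH}, whose argument is exactly the standard Chern--Weil transgression computation you give. Writing $\det\nolimits^{\frac{1}{2}}(f(R^{E}_{t}))=\exp\big(\tfrac{1}{2}\mathrm{tr}\log f(R^{E}_{t})\big)$, differentiating in $t$, and combining $\frac{dR^{E}_{t}}{dt}=d^{\nabla_{t}}A$ with the Bianchi identity, the closedness of the Chern--Weil form and of $\omega$, and the (sign-free, by evenness) Leibniz and cyclicity rules is precisely how the transgressed form is obtained there, so there is nothing to fault and no gap to flag.
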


\section{ The generalized cancellation formulas for even-dimensional Riemannian manifolds }

Set $M$ be a $4d$-dimensional Riemannian manifold with the associated Levi-Civita connection $\nabla^{TM}.$
Let $\xi$ be a complex line bundle and $\xi_{\mathbb{R}}$ be a rank two real oriented Euclidean vector bundle over $M$ carrying with a Euclidean connection $\nabla^{\xi_{\mathbb{R}}}$ which is the real bundle associated to $\xi.$
Write $a=(a_{1},\cdot\cdot\cdot, a_{\iota}), b=(b_{1},\cdot\cdot\cdot, b_{\iota}),$ where $a_{\iota}, b_{\iota}, 1\leqslant \iota\leqslant k$ are integers.

Set
\begin{align}
\Theta_{1}(T_{\mathbb{C}}M, \xi_{\mathbb{R}}, a, b)=&\bigotimes^{\infty}_{n=1}S_{q^{n}}(\widetilde{T_{\mathbb{C}}M})\otimes\bigotimes^{\infty}_{m_{1}=1}\wedge_{q^{m_{1}}}(\widetilde{(\xi^{\otimes a_{1}})_{\mathbb{R}}\otimes\mathbb{C}})\otimes\cdot\cdot\cdot\otimes\bigotimes^{\infty}_{m_{k}=1}\wedge_{q^{m_{k}}}(\widetilde{(\xi^{\otimes a_{k}})_{\mathbb{R}}\otimes\mathbb{C}})\\
&\otimes\bigotimes^{\infty}_{r_{1}=1}\wedge_{q^{r_{1}-\frac{1}{2}}}(\widetilde{(\xi^{\otimes b_{1}})_{\mathbb{R}}\otimes\mathbb{C}})\otimes\cdot\cdot\cdot\otimes\bigotimes^{\infty}_{r_{k}=1}\wedge_{q^{r_{k}-\frac{1}{2}}}(\widetilde{(\xi^{\otimes b_{k}})_{\mathbb{R}}\otimes\mathbb{C}})\nonumber\\
&\otimes\bigotimes^{\infty}_{s_{1}=1}\wedge_{-q^{s_{1}-\frac{1}{2}}}(\widetilde{(\xi^{\otimes b_{1}})_{\mathbb{R}}\otimes\mathbb{C}})\otimes\cdot\cdot\cdot\otimes\bigotimes^{\infty}_{s_{k}=1}\wedge_{-q^{s_{k}-\frac{1}{2}}}(\widetilde{(\xi^{\otimes b_{k}})_{\mathbb{R}}\otimes\mathbb{C}}),\nonumber\\
\Theta_{2}(T_{\mathbb{C}}M, \xi_{\mathbb{R}}, a, b)=&\bigotimes^{\infty}_{n=1}S_{q^{n}}(\widetilde{T_{\mathbb{C}}M})\otimes\bigotimes^{\infty}_{m_{1}=1}\wedge_{q^{m_{1}}}(\widetilde{(\xi^{\otimes b_{1}})_{\mathbb{R}}\otimes\mathbb{C}})\otimes\cdot\cdot\cdot\otimes\bigotimes^{\infty}_{m_{k}=1}\wedge_{q^{m_{k}}}(\widetilde{(\xi^{\otimes b_{k}})_{\mathbb{R}}\otimes\mathbb{C}})\\
&\otimes\bigotimes^{\infty}_{r_{1}=1}\wedge_{q^{r_{1}-\frac{1}{2}}}(\widetilde{(\xi^{\otimes b_{1}})_{\mathbb{R}}\otimes\mathbb{C}})\otimes\cdot\cdot\cdot\otimes\bigotimes^{\infty}_{r_{k}=1}\wedge_{q^{r_{k}-\frac{1}{2}}}(\widetilde{(\xi^{\otimes b_{k}})_{\mathbb{R}}\otimes\mathbb{C}})\nonumber\\
&\otimes\bigotimes^{\infty}_{s_{1}=1}\wedge_{-q^{s_{1}-\frac{1}{2}}}(\widetilde{(\xi^{\otimes a_{1}})_{\mathbb{R}}\otimes\mathbb{C}})\otimes\cdot\cdot\cdot\otimes\bigotimes^{\infty}_{s_{k}=1}\wedge_{-q^{s_{k}-\frac{1}{2}}}(\widetilde{(\xi^{\otimes a_{k}})_{\mathbb{R}}\otimes\mathbb{C}}).\nonumber
\end{align}
Obviously, $\Theta_{1}(T_{\mathbb{C}}M, \xi_{\mathbb{R}}, a, b)$ and $\Theta_{2}(T_{\mathbb{C}}M, \xi_{\mathbb{R}}, a, b)$ admit formal Fourier expansion in $q^{\frac{1}{2}}$ as
\begin{align}
\Theta_{1}(T_{\mathbb{C}}M, \xi_{\mathbb{R}}, a, b)=A_{0}(T_{\mathbb{C}}M, \xi_{\mathbb{R}}, a, b)+A_{1}(T_{\mathbb{C}}M, \xi_{\mathbb{R}}, a, b)q^{\frac{1}{2}}+\cdot\cdot\cdot,\\
\Theta_{2}(T_{\mathbb{C}}M, \xi_{\mathbb{R}}, a, b)=B_{0}(T_{\mathbb{C}}M, \xi_{\mathbb{R}}, a, b)+B_{1}(T_{\mathbb{C}}M, \xi_{\mathbb{R}}, a, b)q^{\frac{1}{2}}+\cdot\cdot\cdot,
\end{align}
where $A_{j}$ and $B_{j}$ are elements in the semi-group formally generated by Hermitian vector bundles over $M.$

Let $\{ \pm2\pi\sqrt{-1}x_{j}|1\leq j\leq 2d\},$ $\{ \pm a_{\iota}2\pi\sqrt{-1}u|1\leq \iota\leq k\}$ and $\{ \pm b_{\iota}2\pi\sqrt{-1}u|1\leq \iota\leq k\}$ be the Chern roots of $T_{\mathbb{C}}M,$ $(\xi^{\otimes a_{\iota}})_{\mathbb{R}}\otimes\mathbb{C}$ and $(\xi^{\otimes b_{\iota}})_{\mathbb{R}}\otimes\mathbb{C}$ respectively and $c=2\pi\sqrt{-1}u.$
Let $p_{1}$ denote the first Pontryagin form.
If $\omega$ is a differential form over $M,$ we denote by $\omega^{(i)}$ its degree $i$ component.
We can now state our main result of this section as follows.

\begin{thm}
The following equation holds,
\begin{align}
&\Big\{e^{\frac{1}{24}[p_{1}(TM)-\sum^{k}_{\iota=1}(a_{\iota}^{2}+2b_{\iota}^{2})p_{1}(\xi_{\mathbb{R}})]}2^{k}\cosh\Big(\frac{a_{1}}{2}c\Big)\cdot\cdot\cdot\cosh\Big(\frac{a_{k}}{2}c\Big)\widehat{A}(TM, \nabla^{TM})\Big\}^{(4d)}=\nonumber\\
&\sum_{r=0}^{[\frac{d}{2}]}2^{d-6r}\Big\{e^{\frac{1}{24}[p_{1}(TM)-\sum^{k}_{\iota=1}(a_{\iota}^{2}+2b_{\iota}^{2})p_{1}(\xi_{\mathbb{R}})]}2^{k}\cosh\Big(\frac{b_{1}}{2}c\Big)\cdot\cdot\cdot\cosh\Big(\frac{b_{k}}{2}c\Big)\widehat{A}(TM, \nabla^{TM})ch(\alpha_{r})\Big\}^{(4d)},\nonumber
\end{align}
where
\begin{align}
&\alpha_{0}=(-1)^{d}\mathbb{C},\\
&\alpha_{1}=(-1)^{d}\Big(-24d+\sum_{\iota=1}^{k}\big(\widetilde{(\xi^{\otimes b_{\iota}})_{\mathbb{R}}\otimes\mathbb{C}}-\widetilde{(\xi^{\otimes a_{\iota}})_{\mathbb{R}}\otimes\mathbb{C}}\big)\Big).
\end{align}
\end{thm}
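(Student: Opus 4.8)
The plan is to realize both sides of the asserted identity as the constant Fourier coefficient of a single modular form, and to extract the right-hand side from the ring structure $\mathcal{M}_{\mathbb{R}}(\Gamma^{0}(2))=\mathbb{R}[\delta_{2},\varepsilon_{2}]$. Set $\Omega=p_{1}(TM)-\sum_{t=1}^{k}(a_{t}^{2}+2b_{t}^{2})p_{1}(\xi_{\mathbb{R}})$ and introduce, following Liu and Han--Zhang, the two $q$-series of characteristic forms
\[
P_{1}(\tau)=\Big\{e^{\frac{1}{24}E_{2}(\tau)\Omega}\,2^{k}{\prod_{t=1}^{k}}\cosh\big(\tfrac{a_{t}}{2}c\big)\,\widehat{A}(TM,\nabla^{TM})\,\mathrm{ch}(\Theta_{1})\Big\}^{(4d)},
\]
\[
P_{2}(\tau)=\Big\{e^{\frac{1}{24}E_{2}(\tau)\Omega}\,2^{k}{\prod_{t=1}^{k}}\cosh\big(\tfrac{b_{t}}{2}c\big)\,\widehat{A}(TM,\nabla^{TM})\,\mathrm{ch}(\Theta_{2})\Big\}^{(4d)}.
\]
Using the Chern-character formulas for symmetric and exterior powers together with the product expansions of the theta functions, the tangent factor $\widehat{A}\,\mathrm{ch}(\bigotimes_{n}S_{q^{n}}(\widetilde{T_{\mathbb{C}}M}))$ becomes $\prod_{j}x_{j}\theta'(0,\tau)/\theta(x_{j},\tau)$, while the $\cosh$ factors absorb the $\xi$-exterior powers into the quotients $\theta_{1}(a_{t}u,\tau)/\theta_{1}(0,\tau)$ and $\theta_{2}(b_{t}u,\tau)\theta_{3}(b_{t}u,\tau)/[\theta_{2}(0,\tau)\theta_{3}(0,\tau)]$ for $P_{1}$, and $\theta_{2}(a_{t}u,\tau)/\theta_{2}(0,\tau)$ and $\theta_{1}(b_{t}u,\tau)\theta_{3}(b_{t}u,\tau)/[\theta_{1}(0,\tau)\theta_{3}(0,\tau)]$ for $P_{2}$. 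Since the constant term of $\Theta_{1}$ is $\mathbb{C}$ and $E_{2}(\tau)=1+O(q)$, the $q^{0}$-coefficient of $P_{1}$ is precisely the left-hand side of the theorem.

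Next I would establish the modularity. Unlike Liu's original identity, the theta functions of the tangent bundle and of $\xi$ here carry different arguments, so the anomalous exponentials $e^{\pi\sqrt{-1}\tau v^{2}}$ produced by the $S$-transformation laws of Section 2 do not cancel among themselves; their total is controlled exactly by $\Omega$. The correction factor $e^{\frac{1}{24}E_{2}(\tau)\Omega}$ is chosen so that the anomaly $-6\sqrt{-1}\tau/\pi$ in the transformation law of $E_{2}$ cancels these exponentials. Carrying this out, and checking invariance under the generators of each group via the laws under $S$ and $T$, one finds that $P_{1}$ is a modular form of weight $2d$ over $\Gamma_{0}(2)$ and $P_{2}$ one of weight $2d$ over $\Gamma^{0}(2)$. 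Because $S$ interchanges $\theta_{1}\leftrightarrow\theta_{2}$ while fixing $\theta$ and $\theta_{3}$, the same transformation carries the $\Theta_{1}$-construction to the $\Theta_{2}$-construction, which yields the decisive relation $P_{1}(-1/\tau)=\tau^{2d}P_{2}(\tau)$. I expect this modularity and anomaly bookkeeping to be the main obstacle.

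Since $\mathcal{M}_{\mathbb{R}}(\Gamma^{0}(2))=\mathbb{R}[\delta_{2},\varepsilon_{2}]$ and $P_{2}$ has weight $2d$, I would write $P_{2}(\tau)=\sum_{r=0}^{[d/2]}b_{r}\,\delta_{2}^{\,d-2r}\varepsilon_{2}^{\,r}$ with characteristic forms $b_{r}$ on $M$. Applying $S$ and using $\delta_{1}(-1/\tau)=\tau^{2}\delta_{2}(\tau)$, $\varepsilon_{1}(-1/\tau)=\tau^{4}\varepsilon_{2}(\tau)$ together with $P_{1}(-1/\tau)=\tau^{2d}P_{2}(\tau)$ gives $P_{1}(\tau)=\sum_{r=0}^{[d/2]}b_{r}\,\delta_{1}^{\,d-2r}\varepsilon_{1}^{\,r}$ with the same $b_{r}$. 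These $b_{r}$ are then determined recursively by matching the low-order coefficients of the expansion of $P_{2}$ in $q^{1/2}$, namely the forms $\{e^{\frac{1}{24}\Omega}2^{k}\prod_{t}\cosh(\tfrac{b_{t}}{2}c)\widehat{A}\,\mathrm{ch}(B_{j})\}^{(4d)}$, against the Fourier expansions $\delta_{2}=-\tfrac{1}{8}-3q^{1/2}+\cdots$ and $\varepsilon_{2}=q^{1/2}+\cdots$; for instance $b_{0}=(-8)^{d}\{e^{\frac{1}{24}\Omega}2^{k}\prod_{t}\cosh(\tfrac{b_{t}}{2}c)\widehat{A}\}^{(4d)}$.

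Finally I would take the $q^{0}$-coefficient of $P_{1}$, which is the left-hand side. Because $\delta_{1}=\tfrac{1}{4}+6q+\cdots$ and $\varepsilon_{1}=\tfrac{1}{16}-q+\cdots$, each monomial $\delta_{1}^{\,d-2r}\varepsilon_{1}^{\,r}$ has constant term $4^{-d}$, so the left-hand side equals $4^{-d}\sum_{r}b_{r}$. Substituting the expressions found for the $b_{r}$ and regrouping term by term converts $4^{-d}b_{r}$ exactly into $2^{d-6r}\{e^{\frac{1}{24}\Omega}2^{k}\prod_{t}\cosh(\tfrac{b_{t}}{2}c)\widehat{A}\,\mathrm{ch}(\alpha_{r})\}^{(4d)}$, where $\alpha_{r}$ is the virtual bundle assembled from the $B_{j}$ and the numerical coefficients of the theta expansions. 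A direct computation of the first two terms gives $\alpha_{0}=(-1)^{d}\mathbb{C}$ and, using $B_{1}=\sum_{t}(\widetilde{(\xi^{\otimes b_{t}})_{\mathbb{R}}\otimes\mathbb{C}}-\widetilde{(\xi^{\otimes a_{t}})_{\mathbb{R}}\otimes\mathbb{C}})$ together with the $-3q^{1/2}$ term of $\delta_{2}$ (which produces the factor $-24d$ through $b_{0}$), $\alpha_{1}=(-1)^{d}(-24d+B_{1})$, as claimed.
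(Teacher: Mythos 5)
Your proposal is correct and follows essentially the same route as the paper's proof: you construct the same $E_{2}$-corrected $q$-series (the paper's $Q_{1},Q_{2}$), establish the same modularity over $\Gamma_{0}(2)$ and $\Gamma^{0}(2)$ with the relation $Q_{1}(-\frac{1}{\tau})=\tau^{2d}Q_{2}(\tau)$, decompose $Q_{2}$ in $\mathbb{R}[\delta_{2}(\tau),\varepsilon_{2}(\tau)]$, and extract the theorem by comparing constant Fourier coefficients, with your normalization $b_{r}=8^{d-2r}h_{r}$ being the only cosmetic difference from the paper's use of $(8\delta_{2}(\tau))^{d-2r}$. Your explicit bookkeeping (the constant term $4^{-d}$ of each $\delta_{1}^{d-2r}\varepsilon_{1}^{r}$, and the derivation of the $-24d$ in $\alpha_{1}$ from the $-3q^{\frac{1}{2}}$ term of $\delta_{2}(\tau)$ interacting with $b_{0}$) agrees with, and is slightly more detailed than, the paper's computation.
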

\begin{proof}
Set
\begin{align}
Q_{1}(\tau)=&\Big\{e^{\frac{1}{24}E_{2}(\tau)[p_{1}(TM)-\sum^{k}_{\iota=1}(a_{\iota}^{2}+2b_{\iota}^{2})p_{1}(\xi_{\mathbb{R}})]}2^{k}\cosh\Big(\frac{a_{1}}{2}c\Big)\cdot\cdot\cdot\cosh\Big(\frac{a_{k}}{2}c\Big)\\
&\times\widehat{A}(TM, \nabla^{TM})ch(\Theta_{1}(T_{\mathbb{C}}M, \xi_{\mathbb{R}}, a, b))\Big\}^{(4d)},\nonumber\\
Q_{2}(\tau)=&\Big\{e^{\frac{1}{24}E_{2}(\tau)[p_{1}(TM)-\sum^{k}_{\iota=1}(a_{\iota}^{2}+2b_{\iota}^{2})p_{1}(\xi_{\mathbb{R}})]}2^{k}\cosh\Big(\frac{b_{1}}{2}c\Big)\cdot\cdot\cdot\cosh\Big(\frac{b_{k}}{2}c\Big)\\
&\times\widehat{A}(TM, \nabla^{TM})ch(\Theta_{2}(T_{\mathbb{C}}M, \xi_{\mathbb{R}}, a, b))\Big\}^{(4d)}.\nonumber
\end{align}
Via simple calculations, we can obtain
\begin{align}
Q_{1}(\tau)=&2^{k}\bigg\{e^{\frac{1}{24}E_{2}(\tau)[p_{1}(TM)-\sum^{k}_{\iota=1}(a_{\iota}^{2}+2b_{\iota}^{2})p_{1}(\xi_{\mathbb{R}})]}\Big(\prod_{j=1}^{2d}x_{j}\frac{\theta'(0, \tau)}{\theta(x_{j}, \tau)}\Big)\\
&\times\Big(\prod_{\iota=1}^{k}\frac{\theta_{1}(a_{\iota}u, \tau)}{\theta_{1}(0, \tau)}\frac{\theta_{3}(b_{\iota}u, \tau)}{\theta_{3}(0, \tau)}\frac{\theta_{2}(b_{\iota}u, \tau)}{\theta_{2}(0, \tau)}\Big)\bigg\}^{(4d)},\nonumber
\end{align}
\begin{align}
Q_{2}(\tau)=&2^{k}\bigg\{e^{\frac{1}{24}E_{2}(\tau)[p_{1}(TM)-\sum^{k}_{\iota=1}(a_{\iota}^{2}+2b_{\iota}^{2})p_{1}(\xi_{\mathbb{R}})]}\Big(\prod_{j=1}^{2d}x_{j}\frac{\theta'(0, \tau)}{\theta(x_{j}, \tau)}\Big)\\
&\times\Big(\prod_{\iota=1}^{k}\frac{\theta_{1}(b_{\iota}u, \tau)}{\theta_{1}(0, \tau)}\frac{\theta_{3}(b_{\iota}u, \tau)}{\theta_{3}(0, \tau)}\frac{\theta_{2}(a_{\iota}u, \tau)}{\theta_{2}(0, \tau)}\Big)\bigg\}^{(4d)}.\nonumber
\end{align}

By direct verifications in applying the transformation laws (2.15)-(2.18), we get
\begin{align}
Q_{1}(T\tau)&=Q_{1}(\tau+1)=Q_{1}(\tau),\\
Q_{1}(ST^{2}ST\tau)&=(T^{2}ST\tau)^{2d}Q_{2}(T^{2}ST\tau)=(T^{2}ST\tau)^{2d}Q_{3}(TST\tau)\\
&=(2\tau+1)^{2d}Q_{1}(\tau).\nonumber
\end{align}
We find $Q_{1}(\tau)$ is a modular form of weight $2d$ over $\Gamma_{0}(2),$ while $Q_{2}(\tau)$ is a modular form of weight $2d$ over $\Gamma^{0}(2).$
Moreover, the following identity holds,
\begin{align}
x_{j}\frac{\theta'(0, -\frac{1}{\tau})}{\theta(x_{j}, -\frac{1}{\tau})}=x_{j}\frac{\tau \theta'(0, \tau)}{e^{\pi\sqrt{-1}\tau x^{2}_{j}}\theta(\tau x_{j}, \tau)};\\
\frac{\theta_{1}(a_{\iota}u, -\frac{1}{\tau})}{\theta_{1}(0, -\frac{1}{\tau})}=\frac{e^{a^{2}_{\iota}\pi\sqrt{-1}\tau u^{2}}\theta_{2}(\tau a_{\iota}u, \tau)}{\theta_{2}(0, \tau)};\\
\frac{\theta_{3}(b_{\iota}u, -\frac{1}{\tau})}{\theta_{3}(0, -\frac{1}{\tau})}=\frac{e^{b^{2}_{\iota}\pi\sqrt{-1}\tau u^{2}}\theta_{3}(\tau b_{\iota}u, \tau)}{\theta_{3}(0, \tau)};\\
\frac{\theta_{2}(b_{\iota}u, -\frac{1}{\tau})}{\theta_{2}(0, -\frac{1}{\tau})}=\frac{e^{b^{2}_{\iota}\pi\sqrt{-1}\tau u^{2}}\theta_{1}(\tau b_{\iota}u, \tau)}{\theta_{1}(0, \tau)},
\end{align}
we conclude that,
\begin{align}
Q_{1}\Big(-\frac{1}{\tau}\Big)=\tau^{2d}Q_{2}(\tau).
\end{align}

Observe that at any point $x\in M,$ up to the volume form determined by the metric on $T_{x}M,$ both $\Theta_{2}(T_{\mathbb{C}}M, \xi_{\mathbb{R}}, a, b)$ and $Q_{i}(\tau), i=1,2$ can be viewed as a power series of $q^{\frac{1}{2}}$ with real Fourier coefficients.
We have
\begin{align}
Q_{2}(\tau)=h_{0}(8\delta_{2}(\tau))^{d}+h_{1}(8\delta_{2}(\tau))^{d-2}\varepsilon_{2}(\tau)+\cdot\cdot\cdot+h_{[\frac{d}{2}]}(8\delta_{2}(\tau))^{d-2[\frac{d}{2}]}\varepsilon_{2}(\tau)^{[\frac{d}{2}]}.
\end{align}
It is easily seen that
\begin{align}
\Theta_{2}(T_{\mathbb{C}}M, \xi_{\mathbb{R}}, a, b)=1+\sum_{\iota=1}^{k}\big(\widetilde{(\xi^{\otimes b_{\iota}})_{\mathbb{R}}\otimes\mathbb{C}}-\widetilde{(\xi^{\otimes a_{\iota}})_{\mathbb{R}}\otimes\mathbb{C}}\big)q^{\frac{1}{2}}+\cdot\cdot\cdot.
\end{align}
Direct computations show that
\begin{align}
Q_{2}(\tau)=&\Big\{e^{\frac{1}{24}(1-24q+\cdot\cdot\cdot)[p_{1}(TM)-\sum^{k}_{\iota=1}(a_{\iota}^{2}+2b_{\iota}^{2})p_{1}(\xi_{\mathbb{R}})]}2^{k}\cosh\Big(\frac{b_{1}}{2}c\Big)\cdot\cdot\cdot\cosh\Big(\frac{b_{k}}{2}c\Big)\\
&\times\widehat{A}(TM, \nabla^{TM})ch(1+\sum_{\iota=1}^{k}\big(\widetilde{(\xi^{\otimes b_{\iota}})_{\mathbb{R}}\otimes\mathbb{C}}-\widetilde{(\xi^{\otimes a_{\iota}})_{\mathbb{R}}\otimes\mathbb{C}}\big)q^{\frac{1}{2}}+\cdot\cdot\cdot)\Big\}^{(4d)}\nonumber\\
=&(-1)^{d}h_{0}+\big(24d(-1)^{d}h_{0}+(-1)^{d-2}h_{1}\big)q^{\frac{1}{2}}+\cdot\cdot\cdot,\nonumber
\end{align}
then we have
\begin{align}
h_{r}=\Big\{e^{\frac{1}{24}[p_{1}(TM)-\sum^{k}_{\iota=1}(a_{\iota}^{2}+2b_{\iota}^{2})p_{1}(\xi_{\mathbb{R}})]}2^{k}\cosh\Big(\frac{b_{1}}{2}c\Big)\cdot\cdot\cdot\cosh\Big(\frac{b_{k}}{2}c\Big)\widehat{A}(TM, \nabla^{TM})ch(\alpha_{r})\Big\}^{(4d)}.
\end{align}
Consequently,
\begin{align}
&\alpha_{0}=(-1)^{d}\mathbb{C},\\
&\alpha_{1}=(-1)^{d}\Big(-24d+\sum_{\iota=1}^{k}\big(\widetilde{(\xi^{\otimes b_{\iota}})_{\mathbb{R}}\otimes\mathbb{C}}-\widetilde{(\xi^{\otimes a_{\iota}})_{\mathbb{R}}\otimes\mathbb{C}}\big)\Big).
\end{align}
According to (3.17), we have
\begin{align}
Q_{1}(\tau)=&\frac{1}{\tau^{2d}}Q_{2}\Big(-\frac{1}{\tau}\Big)\\
=&\frac{1}{\tau^{2d}}\Big[h_{0}\Big(8\delta_{2}\Big(-\frac{1}{\tau}\Big)\Big)^{d}+h_{1}\Big(8\delta_{2}\Big(-\frac{1}{\tau}\Big)\Big)^{d-2}\varepsilon_{2}\Big(-\frac{1}{\tau}\Big)+\cdot\cdot\cdot\nonumber\\
&+h_{[\frac{d}{2}]}\Big(8\delta_{2}\Big(-\frac{1}{\tau}\Big)\Big)^{d-2[\frac{d}{2}]}\varepsilon_{2}\Big(-\frac{1}{\tau}\Big)^{[\frac{d}{2}]}\Big]\nonumber\\
=&h_{0}(8\delta_{1}(\tau))^{d}+h_{1}(8\delta_{1}(\tau))^{d-2}\varepsilon_{1}(\tau)+\cdot\cdot\cdot+h_{[\frac{d}{2}]}(8\delta_{1}(\tau))^{d-2[\frac{d}{2}]}\varepsilon_{1}(\tau)^{[\frac{d}{2}]}.\nonumber
\end{align}
By comparing the constant term in (3.24), we can get Theorem 3.1.
\end{proof}

\begin{cor}
When $\dim M=4,$ the following identity holds,
\begin{align}
&\Big\{2^{k}\cosh\Big(\frac{a_{1}}{2}c\Big)\cdot\cdot\cdot\cosh\Big(\frac{a_{k}}{2}c\Big)\widehat{A}(TM, \nabla^{TM})\Big\}^{(4)}+2\Big\{2^{k}\cosh\Big(\frac{b_{1}}{2}c\Big)\cdot\cdot\cdot\cosh\Big(\frac{b_{k}}{2}c\Big)\nonumber\\
&\times\widehat{A}(TM, \nabla^{TM})\Big\}^{(4)}=-\frac{2^{k}}{8}[p_{1}(TM)-\sum^{k}_{\iota=1}(a_{\iota}^{2}+2b_{\iota}^{2})p_{1}(\xi_{\mathbb{R}})].\nonumber
\end{align}
\end{cor}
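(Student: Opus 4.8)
The plan is to derive the Corollary directly from Theorem 3.1 by specializing to $d=1$ and expanding the exponential prefactor to lowest order. First I would set $d=1$, so that $\dim M=4d=4$. Since $\big[\frac{d}{2}\big]=0$, the sum on the right-hand side of Theorem 3.1 collapses to the single term $r=0$, carrying the weight $2^{d-6r}=2^{1}=2$. Using $\alpha_{0}=(-1)^{d}\mathbb{C}=-\mathbb{C}$, so that $ch(\alpha_{0})=-1$, Theorem 3.1 reduces to
\[
\Big\{e^{P}2^{k}\cosh\big(\tfrac{a_{1}}{2}c\big)\cdots\cosh\big(\tfrac{a_{k}}{2}c\big)\widehat{A}(TM,\nabla^{TM})\Big\}^{(4)}
=-2\Big\{e^{P}2^{k}\cosh\big(\tfrac{b_{1}}{2}c\big)\cdots\cosh\big(\tfrac{b_{k}}{2}c\big)\widehat{A}(TM,\nabla^{TM})\Big\}^{(4)},
\]
where $P:=\frac{1}{24}\big[p_{1}(TM)-\sum_{t=1}^{k}(a_{t}^{2}+2b_{t}^{2})p_{1}(\xi_{\mathbb{R}})\big]$ is a $4$-form.

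Next I would extract the degree-$4$ component. The key point is that $P$ is purely of degree $4$, so $e^{P}=1+P+(\text{terms of degree}\geq 8)$. Writing $F_{a}:=2^{k}\prod_{t}\cosh\big(\tfrac{a_{t}}{2}c\big)\widehat{A}(TM,\nabla^{TM})$ and similarly $F_{b}$, I note that $\widehat{A}(TM,\nabla^{TM})$ has constant term $1$ and each factor $\cosh\big(\tfrac{\bullet}{2}c\big)$ has constant term $1$, so $F_{a}^{(0)}=F_{b}^{(0)}=2^{k}$. Hence the degree-$4$ part of each product picks up exactly one extra contribution $2^{k}P$, coming from the degree-$4$ piece of $e^{P}$ multiplied against the constant term:
\[
\{e^{P}F_{a}\}^{(4)}=F_{a}^{(4)}+2^{k}P,\qquad \{e^{P}F_{b}\}^{(4)}=F_{b}^{(4)}+2^{k}P.
\]

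Substituting these into the reduced identity gives $F_{a}^{(4)}+2^{k}P=-2\big(F_{b}^{(4)}+2^{k}P\big)$, that is,
\[
F_{a}^{(4)}+2F_{b}^{(4)}=-3\cdot 2^{k}P=-\frac{2^{k}}{8}\Big[p_{1}(TM)-\sum_{t=1}^{k}(a_{t}^{2}+2b_{t}^{2})p_{1}(\xi_{\mathbb{R}})\Big],
\]
which is precisely the asserted identity (here $3\cdot\tfrac{1}{24}=\tfrac{1}{8}$). The only step demanding care is the degree-$4$ bookkeeping: one must verify that the constant terms of $\widehat{A}$ and of every $\cosh$ factor are $1$, so that $2^{k}P$ is the sole new degree-$4$ term produced by $e^{P}$. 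Beyond this, no genuine obstacle appears, since Theorem 3.1 already encodes all the modular-invariance content; the Corollary is simply its lowest-dimensional shadow.
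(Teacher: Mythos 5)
Your proposal is correct and is precisely the intended derivation: the paper states Corollary 3.2 without a separate proof because it follows from Theorem 3.1 by setting $d=1$ (so $[\frac{d}{2}]=0$, the sum reduces to the $r=0$ term with coefficient $2$ and $ch(\alpha_{0})=-1$), exactly as you do. Your degree-$4$ bookkeeping — expanding $e^{P}$ as $1+P$ since $P$ is a $4$-form, noting the constant terms of $\widehat{A}$ and the $\cosh$ factors are $1$ so each side contributes $2^{k}P$, and combining $3\cdot\tfrac{1}{24}=\tfrac{1}{8}$ — is the complete and correct justification of the step the paper leaves implicit.
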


\begin{cor}
If $M$ is a closed spin manifold with $\dim M=4,$ $p_{1}(TM)=\sum^{k}_{\iota=1}(a_{\iota}^{2}+2b_{\iota}^{2})p_{1}(\xi_{\mathbb{R}})$ and if $b_{1}+\cdot\cdot\cdot+b_{k}$ is an even number, then
\begin{align}
\Big\{2^{k}\cosh\Big(\frac{a_{1}}{2}c\Big)\cdot\cdot\cdot\cosh\Big(\frac{a_{k}}{2}c\Big)\widehat{A}(TM, \nabla^{TM})\Big\}[M]\nonumber
\end{align}
is a multiple of $2.$
\end{cor}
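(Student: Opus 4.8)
The plan is to combine Corollary 3.2 with the Atiyah--Singer index theorem. First I would substitute the hypothesis $p_{1}(TM)=\sum^{k}_{t=1}(a_{t}^{2}+2b_{t}^{2})p_{1}(\xi_{\mathbb{R}})$ into the identity of Corollary 3.2. Its right-hand side is exactly $-\frac{2^{k}}{8}\big[p_{1}(TM)-\sum_{t}(a_{t}^{2}+2b_{t}^{2})p_{1}(\xi_{\mathbb{R}})\big]$, which vanishes under the assumption, leaving the degree-$4$ identity
\[
\Big\{2^{k}\cosh\Big(\tfrac{a_{1}}{2}c\Big)\cdots\cosh\Big(\tfrac{a_{k}}{2}c\Big)\widehat{A}(TM,\nabla^{TM})\Big\}^{(4)}=-2\Big\{2^{k}\cosh\Big(\tfrac{b_{1}}{2}c\Big)\cdots\cosh\Big(\tfrac{b_{k}}{2}c\Big)\widehat{A}(TM,\nabla^{TM})\Big\}^{(4)}.
\]
Evaluating both sides on the fundamental class $[M]$ then reduces the problem to showing that the $b$-side integral is an integer, for then the $a$-side equals $-2$ times that integer and the conclusion follows immediately.

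The heart of the argument is to recognize the $b$-side as a genuine twisted $\widehat{A}$-integral. Expanding the product gives $2^{k}\prod_{t}\cosh(\tfrac{b_{t}}{2}c)=\prod_{t}\big(e^{\frac{b_{t}}{2}c}+e^{-\frac{b_{t}}{2}c}\big)=\sum_{\varepsilon}e^{\frac{\varepsilon}{2}c}$, where the sum runs over all $\varepsilon=\pm b_{1}\pm\cdots\pm b_{k}$. Flipping any single sign changes $\varepsilon$ by the even integer $2b_{t}$, so every $\varepsilon$ has the same parity as $b_{1}+\cdots+b_{k}$. Under the evenness hypothesis each $\varepsilon$ is therefore even, whence $\varepsilon/2\in\mathbb{Z}$ and $e^{\frac{\varepsilon}{2}c}=\mathrm{ch}(\xi^{\otimes(\varepsilon/2)})$, since $\xi^{\otimes(\varepsilon/2)}$ is an honest line bundle with Chern root $\tfrac{\varepsilon}{2}c$. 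Consequently $2^{k}\prod_{t}\cosh(\tfrac{b_{t}}{2}c)=\mathrm{ch}(V_{b})$, where $V_{b}=\bigoplus_{\varepsilon}\xi^{\otimes(\varepsilon/2)}$ is a true complex vector bundle over $M$.

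With this identification the $b$-side becomes $\{\widehat{A}(TM,\nabla^{TM})\,\mathrm{ch}(V_{b})\}[M]$, which, since $M$ is spin, is the index of the spin Dirac operator twisted by $V_{b}$ and hence an integer by the Atiyah--Singer index theorem. Therefore the $a$-side integral equals $-2$ times an integer, i.e. a multiple of $2$, which is exactly the assertion. I expect the only delicate point to be the parity bookkeeping of the preceding paragraph: one must verify that the evenness of $b_{1}+\cdots+b_{k}$ forces every partial sign sum $\varepsilon$ to be even, so that $V_{b}$ is a genuine bundle rather than a formal half-integer power of $\xi$. Once this is in place, the spin hypothesis supplies the remaining ingredient through the integrality of the twisted Dirac index.
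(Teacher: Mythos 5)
Your proof is correct and is essentially the paper's intended argument: Corollary 3.3 is stated as an immediate consequence of Corollary 3.2, where the hypothesis $p_{1}(TM)=\sum_{t}(a_{t}^{2}+2b_{t}^{2})p_{1}(\xi_{\mathbb{R}})$ kills the right-hand side and the evenness of $b_{1}+\cdots+b_{k}$ makes $2^{k}\prod_{t}\cosh(\tfrac{b_{t}}{2}c)$ the Chern character of the genuine bundle $\bigoplus_{\varepsilon}\xi^{\otimes(\varepsilon/2)}$, so that the $b$-side is a twisted Dirac index on the spin $4$-manifold $M$ and hence an integer by Atiyah--Singer. Your parity bookkeeping (each sign sum $\varepsilon\equiv b_{1}+\cdots+b_{k}\pmod 2$) is exactly the delicate point and you have it right.
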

\begin{proof}
It is easily seen that
\begin{align}
2^{k}\cosh\Big(\frac{b_{1}}{2}c\Big)\cdot\cdot\cdot\cosh\Big(\frac{b_{k}}{2}c\Big)&=2^{k}\cdot\frac{e^{\frac{b_{1}}{2}c}+e^{-\frac{b_{1}}{2}c}}{2}\cdot\cdot\cdot\frac{e^{\frac{b_{k}}{2}c}+e^{-\frac{b_{k}}{2}c}}{2}\\
&=(e^{\frac{b_{1}}{2}c}+e^{-\frac{b_{1}}{2}c})\cdot\cdot\cdot (e^{\frac{b_{k}}{2}c}+e^{-\frac{b_{k}}{2}c})\nonumber\\
&=e^{-\frac{b_{1}+\cdot\cdot\cdot+b_{k}}{2}c}(e^{b_{1}c}+1)\cdot\cdot\cdot (e^{b_{k}c}+1).\nonumber
\end{align}
If $b_{1}+\cdot\cdot\cdot+b_{k}$ is an even number, we get
\begin{align}
e^{-\frac{b_{1}+\cdot\cdot\cdot+b_{k}}{2}c}&=ch((\xi^{*})^{\otimes\frac{b_{1}+\cdot\cdot\cdot+b_{k}}{2}}),\\
e^{b_{1}c}+1&=ch(\xi^{\otimes b_{1}}+\mathbb{C}),
\end{align}
then
\begin{align}
&\mathrm{Ind}\Big\{D\otimes(\xi^{*})^{\otimes\frac{b_{1}+\cdot\cdot\cdot+b_{k}}{2}}\otimes\Big(\xi^{\otimes b_{1}}+\mathbb{C}\Big)\otimes\cdot\cdot\cdot\otimes\Big(\xi^{\otimes b_{k}}+\mathbb{C}\Big)\Big\}_{+}\\
&=\Big\{2^{k}\cosh\Big(\frac{b_{1}}{2}c\Big)\cdot\cdot\cdot\cosh\Big(\frac{b_{k}}{2}c\Big)\widehat{A}(TM, \nabla^{TM})\Big\}[M]\nonumber
\end{align}
is an integer number, where $D$ is the spin Dirac operator.
So by the Corollary 3.2, we get Corollary 3.3.
\end{proof}

\begin{rem}
Notably, since $\Big\{2^{k}\cosh\Big(\frac{a_{1}}{2}c\Big)\cdot\cdot\cdot\cosh\Big(\frac{a_{k}}{2}c\Big)\widehat{A}(TM, \nabla^{TM})\Big\}[M]$ is an integer, we do not expand $\Big\{2^{k}\cosh\Big(\frac{a_{1}}{2}c\Big)\cdot\cdot\cdot\cosh\Big(\frac{a_{k}}{2}c\Big)\widehat{A}(TM, \nabla^{TM})\Big\}^{(4)}$ and preserves the factor $2^{k}$ on both sides of the equation, yielding the divisibility property in Corollary 3.3. The expression $\Big\{2^{k}\cosh\Big(\frac{a_{1}}{2}c\Big)\cdot\cdot\cdot\cosh\Big(\frac{a_{k}}{2}c\Big)\widehat{A}(TM, \nabla^{TM})\Big\}^{(4)}$ is clean.
\end{rem}

\begin{cor}
When $\dim M=8,$ the following identity holds,
\begin{align}
&\Big\{2^{k}\cosh\Big(\frac{a_{1}}{2}c\Big)\cdot\cdot\cdot\cosh\Big(\frac{a_{k}}{2}c\Big)\widehat{A}(TM, \nabla^{TM})\Big\}^{(8)}-\Big\{2^{k}\cosh\Big(\frac{b_{1}}{2}c\Big)\cdot\cdot\cdot\cosh\Big(\frac{b_{k}}{2}c\Big)\nonumber\\
&\times\widehat{A}(TM, \nabla^{TM})\Big\}^{(8)}-\frac{1}{16}\Big\{2^{k}\cosh\Big(\frac{b_{1}}{2}c\Big)\cdot\cdot\cdot\cosh\Big(\frac{b_{k}}{2}c\Big)\widehat{A}(TM, \nabla^{TM})ch\Big(\sum_{\iota=1}^{k}\big(\widetilde{(\xi^{\otimes b_{\iota}})_{\mathbb{R}}\otimes\mathbb{C}}\nonumber\\
&-\widetilde{(\xi^{\otimes a_{\iota}})_{\mathbb{R}}\otimes\mathbb{C}}\big)\Big)\Big\}^{(8)}=-\frac{1}{24}\Big\{[p_{1}(TM)-\sum^{k}_{\iota=1}(a_{\iota}^{2}+2b_{\iota}^{2})p_{1}(\xi_{\mathbb{R}})]2^{k}\cosh\Big(\frac{a_{1}}{2}c\Big)\cdot\cdot\cdot\cosh\Big(\frac{a_{k}}{2}c\Big)\nonumber\\
&\times\widehat{A}(TM, \nabla^{TM})\Big\}^{(8)}+\frac{1}{24}\Big\{[p_{1}(TM)-\sum^{k}_{\iota=1}(a_{\iota}^{2}+2b_{\iota}^{2})p_{1}(\xi_{\mathbb{R}})]2^{k}\cosh\Big(\frac{b_{1}}{2}c\Big)\cdot\cdot\cdot\cosh\Big(\frac{b_{k}}{2}c\Big)\nonumber
\end{align}
\begin{align}
&\times\widehat{A}(TM, \nabla^{TM})\Big\}^{(8)}
+\frac{1}{384}\Big\{[p_{1}(TM)-\sum^{k}_{\iota=1}(a_{\iota}^{2}+2b_{\iota}^{2})p_{1}(\xi_{\mathbb{R}})]2^{k}\cosh\Big(\frac{b_{1}}{2}c\Big)\cdot\cdot\cdot\cosh\Big(\frac{b_{k}}{2}c\Big)\nonumber\\
&\times\widehat{A}(TM, \nabla^{TM})ch\Big(\sum_{\iota=1}^{k}\big(\widetilde{(\xi^{\otimes b_{\iota}})_{\mathbb{R}}\otimes\mathbb{C}}-\widetilde{(\xi^{\otimes a_{\iota}})_{\mathbb{R}}\otimes\mathbb{C}}\big)\Big)\Big\}^{(8)}
+\frac{1}{18432}\Big\{([p_{1}(TM)-\sum^{k}_{\iota=1}(a_{\iota}^{2}\nonumber\\
&+2b_{\iota}^{2})p_{1}(\xi_{\mathbb{R}})])^{2}2^{k}ch\Big(\sum_{\iota=1}^{k}\big(\widetilde{(\xi^{\otimes b_{\iota}})_{\mathbb{R}}\otimes\mathbb{C}}-\widetilde{(\xi^{\otimes a_{\iota}})_{\mathbb{R}}\otimes\mathbb{C}}\big)\Big)\Big\}^{(8)}.\nonumber
\end{align}
\end{cor}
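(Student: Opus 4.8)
The plan is to read off Corollary 3.4 as the degree-eight component of Theorem 3.1 specialized to $4d=8$, that is $d=2$. I would begin by recording the data this specialization produces: since $[\frac{d}{2}]=1$, the sum on the right of Theorem 3.1 runs only over $r=0$ and $r=1$, with coefficients $2^{d-6r}$ equal to $2^{2}=4$ and $2^{-4}=\frac{1}{16}$; and the formulas for $\alpha_{0},\alpha_{1}$ give $\alpha_{0}=\mathbb{C}$ and $\alpha_{1}=-48+\sum_{t=1}^{k}\big(\widetilde{(\xi^{\otimes b_{t}})_{\mathbb{R}}\otimes\mathbb{C}}-\widetilde{(\xi^{\otimes a_{t}})_{\mathbb{R}}\otimes\mathbb{C}}\big)$, so that $ch(\alpha_{0})=1$ and $ch(\alpha_{1})=-48+ch(\Xi)$, where I abbreviate by $\Xi$ the virtual bundle appearing in the statement. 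Because $4\cdot 1+\frac{1}{16}\cdot(-48)=1$, the contribution of $ch(\alpha_{0})$ and the constant part of $ch(\alpha_{1})$ fuse, and Theorem 3.1 collapses to $\{e^{\frac{1}{24}P}\,\mathcal{A}\}^{(8)}=\{e^{\frac{1}{24}P}\,\mathcal{B}\}^{(8)}+\frac{1}{16}\{e^{\frac{1}{24}P}\,\mathcal{B}\,ch(\Xi)\}^{(8)}$, where $P$ denotes the degree-four form $p_{1}(TM)-\sum_{t=1}^{k}(a_{t}^{2}+2b_{t}^{2})p_{1}(\xi_{\mathbb{R}})$ and $\mathcal{A},\mathcal{B}$ stand for $2^{k}\widehat{A}(TM,\nabla^{TM})$ multiplied respectively by $\cosh(\frac{a_{1}}{2}c)\cdots\cosh(\frac{a_{k}}{2}c)$ and $\cosh(\frac{b_{1}}{2}c)\cdots\cosh(\frac{b_{k}}{2}c)$.

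Next I would expand and extract. Modulo forms of degree exceeding eight one has $e^{\frac{1}{24}P}=1+\frac{1}{24}P+\frac{1}{1152}P^{2}$, and I would multiply this into each of the three brackets and take the degree-eight part, bearing in mind that $P$ raises degree by exactly four while $\mathcal{A},\mathcal{B}$ have degree-zero part $2^{k}$. The first two brackets contribute $\{\mathcal{A}\}^{(8)}+\frac{1}{24}\{P\mathcal{A}\}^{(8)}+\frac{1}{1152}2^{k}P^{2}$ and the same with $\mathcal{B}$; the two $\frac{1}{1152}2^{k}P^{2}$ summands are identical and cancel upon subtraction. For the third bracket the crucial observation is that $ch(\Xi)$ has no component below degree four, since each $ch\big(\widetilde{(\xi^{\otimes b_{t}})_{\mathbb{R}}\otimes\mathbb{C}}\big)=2\cosh(b_{t}c)-2$ begins in degree four; hence only $1+\frac{1}{24}P$ can survive against $ch(\Xi)$ within degree eight, yielding $\frac{1}{16}\{\mathcal{B}\,ch(\Xi)\}^{(8)}+\frac{1}{384}\{P\mathcal{B}\,ch(\Xi)\}^{(8)}$, together with the term $\frac{1}{18432}\{P^{2}2^{k}ch(\Xi)\}^{(8)}$ coming from the $\frac{1}{1152}P^{2}$ piece. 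Transposing the two $\cosh$-only brackets to the left then reproduces exactly the displayed identity of Corollary 3.4.

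I expect the whole difficulty to be bookkeeping rather than anything conceptual: one must track the degree filtration consistently across the products, remembering that $c=2\pi\sqrt{-1}u$ and the Pontryagin forms each carry even positive degree. In particular this same bookkeeping shows that the final summand $\frac{1}{18432}\{P^{2}2^{k}ch(\Xi)\}^{(8)}$ is in fact identically zero, being the degree-eight part of a form supported in degrees $\geq 12$; I would retain it in the statement only in order to exhibit the complete formal second-order expansion of $e^{\frac{1}{24}P}$. No modular or analytic input beyond Theorem 3.1 is required, the corollary being purely its $d=2$ Taylor truncation in degree eight.
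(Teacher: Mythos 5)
Your proposal is correct and is exactly the derivation the paper intends: Corollary 3.3 is stated without separate proof as the $d=2$ specialization of Theorem 3.1, and your bookkeeping — the fusion $4\cdot 1+\tfrac{1}{16}\cdot(-48)=1$ of the $r=0$ term with the constant part of $ch(\alpha_{1})$, the expansion $e^{\frac{1}{24}P}=1+\tfrac{1}{24}P+\tfrac{1}{1152}P^{2}$ in degrees $\leq 8$, the cancellation of the two $\tfrac{1}{1152}2^{k}P^{2}$ summands, and the coefficients $\tfrac{1}{16\cdot 24}=\tfrac{1}{384}$ and $\tfrac{1}{16\cdot 1152}=\tfrac{1}{18432}$ — reproduces the displayed identity exactly. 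Your side remark that the final $\tfrac{1}{18432}$ term vanishes identically for degree reasons (since $ch$ of the rank-zero virtual bundle begins in degree four while $P^{2}$ already has degree eight) is also correct, even though the paper retains that term in the statement.
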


\section{  The generalized cancellation formulas involving a complex line bundle for even-dimensional Riemannian manifolds }
Let $\eta$ be a rank two oriented Euclidean vector bundle over $M$ carrying a Euclidean connection $\nabla^{\eta}.$
Assume that
\begin{align}
\overline{\Theta_{1}}(T_{\mathbb{C}}M, \xi_{\mathbb{R}}, \eta_{\mathbb{C}}, a, b)=&\bigotimes^{\infty}_{n=1}S_{q^{n}}(\widetilde{T_{\mathbb{C}}M})\otimes\bigotimes^{\infty}_{m=1}\wedge_{q^{m}}(\widetilde{(\xi^{\otimes a})_{\mathbb{R}}\otimes\mathbb{C}}-2\widetilde{\eta_{\mathbb{C}}})\\
&\otimes\bigotimes^{\infty}_{r=1}\wedge_{q^{r-\frac{1}{2}}}(\widetilde{(\xi^{\otimes b})_{\mathbb{R}}\otimes\mathbb{C}}+\widetilde{\eta_{\mathbb{C}}})\otimes\bigotimes^{\infty}_{s=1}\wedge_{-q^{s-\frac{1}{2}}}(\widetilde{(\xi^{\otimes b})_{\mathbb{R}}\otimes\mathbb{C}}+\widetilde{\eta_{\mathbb{C}}}),\nonumber\\
\overline{\Theta_{2}}(T_{\mathbb{C}}M, \xi_{\mathbb{R}}, \eta_{\mathbb{C}}, a, b)=&\bigotimes^{\infty}_{n=1}S_{q^{n}}(\widetilde{T_{\mathbb{C}}M})\otimes\bigotimes^{\infty}_{m=1}\wedge_{q^{m}}(\widetilde{(\xi^{\otimes b})_{\mathbb{R}}\otimes\mathbb{C}}+\widetilde{\eta_{\mathbb{C}}})\\
&\otimes\bigotimes^{\infty}_{r=1}\wedge_{q^{r-\frac{1}{2}}}(\widetilde{(\xi^{\otimes b})_{\mathbb{R}}\otimes\mathbb{C}}+\widetilde{\eta_{\mathbb{C}}})\otimes\bigotimes^{\infty}_{s=1}\wedge_{-q^{s-\frac{1}{2}}}(\widetilde{(\xi^{\otimes a})_{\mathbb{R}}\otimes\mathbb{C}}-2\widetilde{\eta_{\mathbb{C}}}).\nonumber
\end{align}

Let $\{ \pm2\pi\sqrt{-1}\overline{u}\}$  be the Chern roots of $\eta_{\mathbb{C}}$ and $\overline{c}=2\pi\sqrt{-1}\overline{u}.$
We can claim the following theorem.

\begin{thm}
The following equation holds,
\begin{align}
&\Big\{e^{\frac{1}{24}[p_{1}(TM)-(a^{2}+2b^{2})p_{1}(\xi_{\mathbb{R}})]}2\cosh\Big(\frac{a}{2}c\Big)\frac{1}{\cosh^{2}\Big(\frac{1}{2}\overline{c}\Big)}
\widehat{A}(TM, \nabla^{TM})\Big\}^{(4d)}=\nonumber\\
&\sum_{r=0}^{[\frac{d}{2}]}2^{d-6r}\Big\{e^{\frac{1}{24}[p_{1}(TM)-(a^{2}+2b^{2})p_{1}(\xi_{\mathbb{R}})]}2\cosh\Big(\frac{b}{2}c\Big)\cosh\Big(\frac{1}{2}\overline{c}\Big)\widehat{A}(TM, \nabla^{TM})ch(\overline{\alpha_{r}})\Big\}^{(4d)},\nonumber
\end{align}
where
\begin{align}
&\overline{\alpha_{0}}=(-1)^{d}\mathbb{C},\\
&\overline{\alpha_{1}}=(-1)^{d}\big(-24d+\widetilde{(\xi^{\otimes b})_{\mathbb{R}}\otimes\mathbb{C}}-\widetilde{(\xi^{\otimes a})_{\mathbb{R}}\otimes\mathbb{C}}+3\widetilde{\eta_{\mathbb{C}}}\big).
\end{align}
\end{thm}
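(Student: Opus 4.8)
The plan is to run the proof of Theorem 3.1 essentially unchanged, the only genuinely new ingredient being the way the rank-two bundle $\eta_{\mathbb{C}}$ enters the theta-function product. First I would introduce the two weighted forms
\begin{align}
\overline{Q_1}(\tau)=&\Big\{e^{\frac{1}{24}E_2(\tau)[p_1(TM)-(a^2+2b^2)p_1(\xi_{\mathbb{R}})]}2\cosh\Big(\frac{a}{2}c\Big)\frac{1}{\cosh^2\big(\frac12\overline{c}\big)}\nonumber\\
&\times\widehat{A}(TM,\nabla^{TM})ch\big(\overline{\Theta_1}(T_{\mathbb{C}}M,\xi_{\mathbb{R}},\eta_{\mathbb{C}},a,b)\big)\Big\}^{(4d)},\nonumber
\end{align}
and $\overline{Q_2}(\tau)$ defined in the same manner but with the roles of $a$ and $b$ in the hyperbolic cosines interchanged, with $\frac{1}{\cosh^2(\frac12\overline{c})}$ replaced by $\cosh(\frac12\overline{c})$, and with $ch(\overline{\Theta_2})$ in place of $ch(\overline{\Theta_1})$. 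Using the Chern character relations (2.7)--(2.8), the Jacobi identity (2.14) and the defining products of the theta functions exactly as in (3.7)--(3.8), a direct computation should produce
\begin{align}
\overline{Q_1}(\tau)=&2\bigg\{e^{\frac{1}{24}E_2(\tau)[p_1(TM)-(a^2+2b^2)p_1(\xi_{\mathbb{R}})]}\Big(\prod_{j=1}^{2d}x_j\frac{\theta'(0,\tau)}{\theta(x_j,\tau)}\Big)\frac{\theta_1(au,\tau)}{\theta_1(0,\tau)}\frac{\theta_3(bu,\tau)}{\theta_3(0,\tau)}\nonumber\\
&\times\frac{\theta_2(bu,\tau)}{\theta_2(0,\tau)}\Big(\frac{\theta_1(\overline{u},\tau)}{\theta_1(0,\tau)}\Big)^{-2}\frac{\theta_3(\overline{u},\tau)}{\theta_3(0,\tau)}\frac{\theta_2(\overline{u},\tau)}{\theta_2(0,\tau)}\bigg\}^{(4d)},\nonumber
\end{align}
together with the analogous expression for $\overline{Q_2}(\tau)$ in which $\theta_1(au)$, $\theta_2(bu)$, $(\theta_1(\overline{u})/\theta_1(0))^{-2}$ and $\theta_2(\overline{u})/\theta_2(0)$ are replaced by $\theta_1(bu)$, $\theta_2(au)$, $\theta_1(\overline{u})/\theta_1(0)$ and $(\theta_2(\overline{u})/\theta_2(0))^{-2}$, respectively.

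Next I would feed the transformation laws (2.16)--(2.24) into these products to show that $\overline{Q_1}(\tau)$ is a modular form of weight $2d$ over $\Gamma_0(2)$, that $\overline{Q_2}(\tau)$ is a modular form of weight $2d$ over $\Gamma^0(2)$, and that $\overline{Q_1}(-\frac{1}{\tau})=\tau^{2d}\overline{Q_2}(\tau)$. The interchange $\theta_1\leftrightarrow\theta_2$ under $S$ is exactly what converts the $\overline{\Theta_1}$-product into the $\overline{\Theta_2}$-product, the symmetric occurrence of $\theta_2$ and $\theta_3$ guarantees invariance under $T$ (resp. $T^2$), and the $E_2$-exponential absorbs the quadratic exponential anomalies produced by the $T_{\mathbb{C}}M$- and $\xi_{\mathbb{R}}$-factors precisely as in Theorem 3.1. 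The weight $2d$ is governed only by the total form degree $4d$ and is insensitive to the number of bundle factors, so the one genuinely new point --- which I expect to be the main obstacle --- is to check that the three $\eta_{\mathbb{C}}$-factors, occurring with the unbalanced multiplicities $-2,+1,+1$, do not spoil modularity. Here the key observation is that each theta quotient transforms with the same quadratic anomaly $e^{\pi\sqrt{-1}\tau\overline{u}^2}$, so the combined $\eta_{\mathbb{C}}$-anomaly is $e^{(-2+1+1)\pi\sqrt{-1}\tau\overline{u}^2}=1$; the $\eta_{\mathbb{C}}$-part is therefore modular by itself, which is exactly why no $p_1(\eta)$ term is required in the exponent of $E_2$.

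Finally, since $\mathcal{M}_{\mathbb{R}}(\Gamma^0(2))=\mathbb{R}[\delta_2(\tau),\varepsilon_2(\tau)]$ by Lemma 2.2 and $\overline{Q_2}$ has weight $2d$, I would expand
\begin{align}
\overline{Q_2}(\tau)=\overline{h_0}(8\delta_2(\tau))^{d}+\overline{h_1}(8\delta_2(\tau))^{d-2}\varepsilon_2(\tau)+\cdots+\overline{h_{[\frac{d}{2}]}}(8\delta_2(\tau))^{d-2[\frac{d}{2}]}\varepsilon_2(\tau)^{[\frac{d}{2}]},\nonumber
\end{align}
and then use $\delta_2(-\frac{1}{\tau})=\tau^2\delta_1(\tau)$, $\varepsilon_2(-\frac{1}{\tau})=\tau^4\varepsilon_1(\tau)$ together with $\overline{Q_1}(\tau)=\tau^{-2d}\overline{Q_2}(-\frac{1}{\tau})$ to obtain the parallel expansion $\overline{Q_1}(\tau)=\sum_{r}\overline{h_r}(8\delta_1(\tau))^{d-2r}\varepsilon_1(\tau)^r$. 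Since $\delta_1,\varepsilon_1$ have constant terms $\frac14,\frac{1}{16}$ while $\varepsilon_2$ has vanishing constant term, comparing the constant ($q^0$) terms of the two expansions (and using that $E_2$, $ch(\overline{\Theta_1})$ and $ch(\overline{\Theta_2})$ all have constant term $1$) yields the asserted identity, the coefficients being identified as $\overline{h_r}=\{e^{\frac{1}{24}[p_1(TM)-(a^2+2b^2)p_1(\xi_{\mathbb{R}})]}2\cosh(\frac{b}{2}c)\cosh(\frac12\overline{c})\widehat{A}(TM,\nabla^{TM})ch(\overline{\alpha_r})\}^{(4d)}$ through the $q$-expansion of $\overline{Q_2}$. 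Reading off $\overline{\alpha_0},\overline{\alpha_1}$ then amounts to matching the lowest coefficients: a short computation gives $\overline{\Theta_2}=1+(\widetilde{(\xi^{\otimes b})_{\mathbb{R}}\otimes\mathbb{C}}-\widetilde{(\xi^{\otimes a})_{\mathbb{R}}\otimes\mathbb{C}}+3\widetilde{\eta_{\mathbb{C}}})q^{\frac12}+\cdots$, so comparing the $q^0$- and $q^{\frac12}$-terms produces $\overline{\alpha_0}=(-1)^d\mathbb{C}$ and $\overline{\alpha_1}=(-1)^d(-24d+\widetilde{(\xi^{\otimes b})_{\mathbb{R}}\otimes\mathbb{C}}-\widetilde{(\xi^{\otimes a})_{\mathbb{R}}\otimes\mathbb{C}}+3\widetilde{\eta_{\mathbb{C}}})$, completing the argument.
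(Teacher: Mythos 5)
Your proposal is correct and follows essentially the same route as the paper: the paper likewise defines $\overline{Q_1},\overline{Q_2}$ with the $E_2$-weighting, derives exactly the theta-product expressions you write (its (4.6)--(4.7)), establishes modularity over $\Gamma_0(2)$ and $\Gamma^0(2)$ together with $\overline{Q_1}(-\frac{1}{\tau})=\tau^{2d}\overline{Q_2}(\tau)$, and then concludes ``as in the proof of Theorem 3.1'' by the $\delta_2,\varepsilon_2$-expansion and comparison of $q^0$- and $q^{\frac{1}{2}}$-coefficients, just as you do. Your added observation that the $\eta_{\mathbb{C}}$-factors with multiplicities $-2,+1,+1$ have cancelling quadratic anomalies (hence no $p_1(\eta)$ term is needed) is a correct and welcome elaboration of a point the paper leaves implicit.
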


\begin{proof}
Let
\begin{align}
\overline{Q_{1}}(\tau)=&\Big\{e^{\frac{1}{24}E_{2}(\tau)[p_{1}(TM)-(a^{2}+2b^{2})p_{1}(\xi_{\mathbb{R}})]}2\cosh\Big(\frac{a}{2}c\Big)\frac{1}{\cosh^{2}\Big(\frac{1}{2}\overline{c}\Big)}\\
&\times\widehat{A}(TM, \nabla^{TM})ch(\overline{\Theta_{1}}(T_{\mathbb{C}}M, \xi_{\mathbb{R}}, \eta_{\mathbb{C}}, a, b))\Big\}^{(4d)},\nonumber\\
\overline{Q_{2}}(\tau)=&\Big\{e^{\frac{1}{24}E_{2}(\tau)[p_{1}(TM)-(a^{2}+2b^{2})p_{1}(\xi_{\mathbb{R}})]}2\cosh\Big(\frac{b}{2}c\Big)\cosh\Big(\frac{1}{2}\overline{c}\Big)\\
&\times\widehat{A}(TM, \nabla^{TM})ch(\overline{\Theta_{2}}(T_{\mathbb{C}}M, \xi_{\mathbb{R}}, \eta_{\mathbb{C}}, a, b))\Big\}^{(4d)}.\nonumber
\end{align}
It is immediate that
\begin{align}
\overline{Q_{1}}(\tau)=&2\bigg\{e^{\frac{1}{24}E_{2}(\tau)[p_{1}(TM)-(a^{2}+2b^{2})p_{1}(\xi_{\mathbb{R}})]}\Big(\prod_{j=1}^{2d}x_{j}\frac{\theta'(0, \tau)}{\theta(x_{j}, \tau)}\Big)\\
&\times\Big(\frac{\theta_{1}(au, \tau)}{\theta_{1}(0, \tau)}\frac{\theta_{3}(bu, \tau)}{\theta_{3}(0, \tau)}\frac{\theta_{2}(bu, \tau)}{\theta_{2}(0, \tau)}\Big)\Big(\frac{\theta^{2}_{1}(0, \tau)}{\theta^{2}_{1}(\overline{u}, \tau)}\frac{\theta_{3}(\overline{u}, \tau)}{\theta_{3}(0, \tau)}\frac{\theta_{2}(\overline{u}, \tau)}{\theta_{2}(0, \tau)}\Big)\bigg\}^{(4d)},\nonumber\\
\overline{Q_{2}}(\tau)=&2\bigg\{e^{\frac{1}{24}E_{2}(\tau)[p_{1}(TM)-(a^{2}+2b^{2})p_{1}(\xi_{\mathbb{R}})]}\Big(\prod_{j=1}^{2d}x_{j}\frac{\theta'(0, \tau)}{\theta(x_{j}, \tau)}\Big)\\
&\times\Big(\frac{\theta_{1}(bu, \tau)}{\theta_{1}(0, \tau)}\frac{\theta_{3}(bu, \tau)}{\theta_{3}(0, \tau)}\frac{\theta_{2}(au, \tau)}{\theta_{2}(0, \tau)}\Big)
\Big(\frac{\theta^{2}_{2}(0, \tau)}{\theta^{2}_{2}(\overline{u}, \tau)}\frac{\theta_{3}(\overline{u}, \tau)}{\theta_{3}(0, \tau)}\frac{\theta_{1}(\overline{u}, \tau)}{\theta_{1}(0, \tau)}\Big)\bigg\}^{(4d)}.\nonumber
\end{align}
Therefore, $\overline{Q_{1}}(\tau)$ is a modular form of weight $2d$ over $\Gamma_{0}(2),$ while $\overline{Q_{2}}(\tau)$ is a modular form of weight $2d$ over $\Gamma^{0}(2).$
And we have
\begin{align}
\overline{Q_{1}}\Big(-\frac{1}{\tau}\Big)=\tau^{2d}\overline{Q_{2}}(\tau).
\end{align}

As in the proof of Theorem 3.1, we can get Theorem 4.1.
\end{proof}

\begin{cor}
When $\dim M=4,$ the following identity holds,
\begin{align}
&\Big\{2\cosh\Big(\frac{a}{2}c\Big)\frac{1}{\cosh^{2}\Big(\frac{1}{2}\overline{c}\Big)}\widehat{A}(TM, \nabla^{TM})\Big\}^{(4)}+2\Big\{2\cosh\Big(\frac{b}{2}c\Big)\cosh\Big(\frac{1}{2}\overline{c}\Big)\nonumber\\
&\times\widehat{A}(TM, \nabla^{TM})\Big\}^{(4)}=-\frac{1}{4}[p_{1}(TM)-(a^{2}+2b^{2})p_{1}(\xi_{\mathbb{R}})].\nonumber
\end{align}
\end{cor}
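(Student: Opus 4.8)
The plan is to deduce Corollary 4.3 as the special case $d=1$ of Theorem 4.1, followed by an extraction of the degree-four component. First I would set $d=1$ in the theorem. Since $[\frac{d}{2}]=[\frac{1}{2}]=0$, the sum on the right collapses to the single term $r=0$, whose numerical weight is $2^{d-6r}=2^{1}=2$ and whose coefficient bundle is $\overline{\alpha_{0}}=(-1)^{d}\mathbb{C}=-\mathbb{C}$, so that $ch(\overline{\alpha_{0}})=-1$. Abbreviating $P=p_{1}(TM)-(a^{2}+2b^{2})p_{1}(\xi_{\mathbb{R}})$ for the degree-four form in the exponent and writing $\widehat{A}=\widehat{A}(TM,\nabla^{TM})$, Theorem 4.1 then becomes the homogeneous relation
\begin{align}
\Big\{e^{\frac{1}{24}P}2\cosh\Big(\tfrac{a}{2}c\Big)\tfrac{1}{\cosh^{2}\big(\frac12\overline{c}\big)}\widehat{A}\Big\}^{(4)}+2\Big\{e^{\frac{1}{24}P}2\cosh\Big(\tfrac{b}{2}c\Big)\cosh\Big(\tfrac12\overline{c}\Big)\widehat{A}\Big\}^{(4)}=0.\nonumber
\end{align}

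Next I would extract the component of total degree four. The factor expands as $e^{\frac{1}{24}P}=1+\frac{1}{24}P+(\text{degree}\geq 8)$, while each bracketed characteristic form has constant (degree-zero) term equal to $2$, since $\cosh$, $\cosh^{-2}$ and $\widehat{A}$ all begin with $1$ and the overall numerical prefactor is $2$. Hence the degree-four part of $e^{\frac{1}{24}P}$ times either bracket equals the degree-four part of the bracket itself plus $\frac{1}{24}P\cdot 2=\frac{P}{12}$. Denoting by $F^{(4)}$ and $G^{(4)}$ the degree-four components of $2\cosh(\frac{a}{2}c)\cosh^{-2}(\frac12\overline{c})\widehat{A}$ and $2\cosh(\frac{b}{2}c)\cosh(\frac12\overline{c})\widehat{A}$ respectively, the relation above reads
\begin{align}
\Big(F^{(4)}+\tfrac{P}{12}\Big)+2\Big(G^{(4)}+\tfrac{P}{12}\Big)=0.\nonumber
\end{align}

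Finally I would solve for $F^{(4)}+2G^{(4)}$, obtaining
\begin{align}
F^{(4)}+2G^{(4)}=-\tfrac{P}{12}-\tfrac{2P}{12}=-\tfrac{P}{4},\nonumber
\end{align}
which, after restoring $P=p_{1}(TM)-(a^{2}+2b^{2})p_{1}(\xi_{\mathbb{R}})$, is precisely the asserted identity. The computation is essentially bookkeeping; the only point demanding genuine care is the treatment of the exponential factor $e^{\frac{1}{24}P}$, which is present in Theorem 4.1 but absent from the corollary statement. Correctly isolating its degree-four contribution---namely that it multiplies the constant term $2$ of each bracket to produce the two $\frac{P}{12}$ summands---is exactly what converts the homogeneous theorem into the inhomogeneous corollary, and is the step I would verify most carefully.
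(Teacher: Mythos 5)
Your proposal is correct and follows exactly the route the paper intends (the corollary is stated without explicit proof as an immediate consequence of Theorem 4.1): setting $d=1$ collapses the right-hand sum to the single $r=0$ term with weight $2$ and $ch(\overline{\alpha_{0}})=-1$, yielding the homogeneous relation, and your extraction of the degree-four component of $e^{\frac{1}{24}P}$ against the constant term $2$ of each bracket correctly produces the inhomogeneous term $-\frac{3P}{12}=-\frac{P}{4}$. Your bookkeeping is also consistent with the analogous Corollary 3.2, where the constant term $2^{k}$ gives $-\frac{3\cdot 2^{k}}{24}P=-\frac{2^{k}}{8}P$.
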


\begin{cor}
Assume that $M$ is a closed spin manifold and $\dim M=4,$ $p_{1}(TM)=(a^{2}+2b^{2})p_{1}(\xi_{\mathbb{R}})$ and if $b$ is an even number, $\eta\otimes\mathbb{C}=L\oplus\overline{L},$ then
\begin{align}
\int_{M}2\cosh\Big(\frac{a}{2}c\Big)\frac{1}{\cosh^{2}\Big(\frac{1}{2}\overline{c}\Big)}\widehat{A}(TM, \nabla^{TM})\nonumber
\end{align}
is an integer, where $\eta$ is a real orientable Euclidean vector bundle of rank $2$ over $M,$ $L$ is the complex line bundle associated with the spin$^{c}$ structure, $\overline{L}$ is its conjugate bundle.
\begin{proof}
Clearly,
\begin{align}
4\cosh\Big(\frac{b}{2}c\Big)\cosh\Big(\frac{1}{2}\overline{c}\Big)&=4\cdot\frac{e^{\frac{b}{2}c}-e^{-\frac{b}{2}c}}{2}\cdot\frac{e^{\frac{1}{2}\overline{c}}-e^{-\frac{1}{2}\overline{c}}}{2}\\
&=(e^{\frac{b}{2}c}-e^{-\frac{b}{2}c})(e^{\frac{1}{2}\overline{c}}-e^{-\frac{1}{2}\overline{c}})\nonumber\\
&=e^{\frac{b}{2}c}e^{\frac{1}{2}\overline{c}}(1-e^{-bc})(1-e^{-\overline{c}}).\nonumber
\end{align}
If $b$ is an even number, we have
\begin{align}
2\Big\{2\cosh\Big(\frac{b}{2}c\Big)\cosh\Big(\frac{1}{2}\overline{c}\Big)\widehat{A}(TM, \nabla^{TM})\Big\}[M]=\mathrm{Ind}\Big\{D^{c}\otimes\xi^{\otimes\frac{b}{2}}\otimes\Big(\mathbb{C}-(\xi^{*})^{\otimes b}\Big)\otimes\Big(\mathbb{C}-L^{*}\Big)\Big\}_{+},
\end{align}
is an integer.
So by Corollary 4.2, we get Corollary 4.3.
\end{proof}
\end{cor}

\begin{cor}
When $\dim M=8,$ the following identity holds,
\begin{align}
&\Big\{2\cosh\Big(\frac{a}{2}c\Big)\frac{1}{\cosh^{2}\Big(\frac{1}{2}\overline{c}\Big)}\widehat{A}(TM, \nabla^{TM})\Big\}^{(8)}-\Big\{2\cosh\Big(\frac{b}{2}c\Big)\cosh\Big(\frac{1}{2}\overline{c}\Big)\widehat{A}(TM, \nabla^{TM})\Big\}^{(8)}\nonumber\\
&-\frac{1}{16}\Big\{2\cosh\Big(\frac{b}{2}c\Big)\cosh\Big(\frac{1}{2}\overline{c}\Big)\widehat{A}(TM, \nabla^{TM})ch\big(\widetilde{(\xi^{\otimes b})_{\mathbb{R}}\otimes\mathbb{C}}-\widetilde{(\xi^{\otimes a})_{\mathbb{R}}\otimes\mathbb{C}}+3\widetilde{\eta_{\mathbb{C}}}\big)\Big\}^{(8)}\nonumber\\
&=-\frac{1}{24}\Big\{[p_{1}(TM)-(a^{2}+2b^{2})p_{1}(\xi_{\mathbb{R}})]2\cosh\Big(\frac{a}{2}c\Big)\frac{1}{\cosh^{2}\Big(\frac{1}{2}\overline{c}\Big)}\widehat{A}(TM, \nabla^{TM})\Big\}^{(8)}+\frac{1}{24}\nonumber\\
&\times \Big\{[p_{1}(TM)-(a^{2}+2b^{2})p_{1}(\xi_{\mathbb{R}})]2\cosh\Big(\frac{b}{2}c\Big)\cosh\Big(\frac{1}{2}\overline{c}\Big)\widehat{A}(TM, \nabla^{TM})\Big\}^{(8)}+\frac{1}{384}\nonumber\\
&\times  \Big\{[p_{1}(TM)-(a^{2}+2b^{2})p_{1}(\xi_{\mathbb{R}})]2\cosh\Big(\frac{b}{2}c\Big)\cosh\Big(\frac{1}{2}\overline{c}\Big)\widehat{A}(TM, \nabla^{TM})ch\big(\widetilde{(\xi^{\otimes b})_{\mathbb{R}}\otimes\mathbb{C}}\nonumber\\
&-\widetilde{(\xi^{\otimes a})_{\mathbb{R}}\otimes\mathbb{C}}+3\widetilde{\eta_{\mathbb{C}}}\big)\Big\}^{(8)}
+\frac{1}{18432}\Big\{([p_{1}(TM)-(a^{2}+2b^{2})p_{1}(\xi_{\mathbb{R}})])^{2}2ch\big(\widetilde{(\xi^{\otimes b})_{\mathbb{R}}\otimes\mathbb{C}}\nonumber\\
&-\widetilde{(\xi^{\otimes a})_{\mathbb{R}}\otimes\mathbb{C}}+3\widetilde{\eta_{\mathbb{C}}}\big)\Big\}^{(8)}.\nonumber
\end{align}
\end{cor}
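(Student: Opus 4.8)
The plan is to read off this identity as the $d=2$ specialization of Theorem 4.1 and then expand the prefactor $e^{\frac{1}{24}P}$. Throughout I abbreviate $P:=p_{1}(TM)-(a^{2}+2b^{2})p_{1}(\xi_{\mathbb{R}})$, a form of degree $4$, together with
\[
\Phi_{a}:=2\cosh\Big(\frac{a}{2}c\Big)\frac{1}{\cosh^{2}\Big(\frac{1}{2}\overline{c}\Big)}\widehat{A}(TM,\nabla^{TM}),\qquad \Phi_{b}:=2\cosh\Big(\frac{b}{2}c\Big)\cosh\Big(\frac{1}{2}\overline{c}\Big)\widehat{A}(TM,\nabla^{TM}),
\]
and $\beta:=\widetilde{(\xi^{\otimes b})_{\mathbb{R}}\otimes\mathbb{C}}-\widetilde{(\xi^{\otimes a})_{\mathbb{R}}\otimes\mathbb{C}}+3\widetilde{\eta_{\mathbb{C}}}$, so that when $d=2$ one has $\overline{\alpha_{0}}=\mathbb{C}$ and $\overline{\alpha_{1}}=-48+\beta$. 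Putting $\dim M=8$ in Theorem 4.1, the range $0\leq r\leq[\frac{d}{2}]$ shrinks to $r\in\{0,1\}$, with weights $2^{d}=4$ and $2^{d-6}=\frac{1}{16}$. Using $ch(\overline{\alpha_{0}})=1$ and $ch(\overline{\alpha_{1}})=-48+ch(\beta)$, I would merge the $r=0$ term with the constant piece of the $r=1$ term through $4-\frac{48}{16}=1$, reducing Theorem 4.1 to the clean relation
\[
\big\{e^{\frac{1}{24}P}\Phi_{a}\big\}^{(8)}=\big\{e^{\frac{1}{24}P}\Phi_{b}\big\}^{(8)}+\frac{1}{16}\big\{e^{\frac{1}{24}P}\Phi_{b}\,ch(\beta)\big\}^{(8)}.
\]

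Next I would expand $e^{\frac{1}{24}P}=1+\frac{1}{24}P+\frac{1}{1152}P^{2}$, every higher power of $P$ having degree greater than $8$, and isolate the degree-$8$ component of each of the three products. The contributions coming from the constant term $1$ of the exponentials assemble into the left-hand side $\{\Phi_{a}\}^{(8)}-\{\Phi_{b}\}^{(8)}-\frac{1}{16}\{\Phi_{b}\,ch(\beta)\}^{(8)}$ of the asserted identity, while transposing the remaining pieces to the other side produces the right-hand side. The numerical factors $\frac{1}{24}$, $\frac{1}{16\cdot24}=\frac{1}{384}$ and $\frac{1}{16\cdot1152}=\frac{1}{18432}$ are exactly the products of the expansion coefficients $\frac{1}{24},\frac{1}{1152}$ with the weights $1$ and $\frac{1}{16}$.

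Finally I would perform the degree bookkeeping that yields the precise statement. Since $P$ has degree $4$ and the degree-$0$ parts of $\Phi_{a}$ and $\Phi_{b}$ both equal $2$, the two pure $P^{2}$ contributions $\frac{1}{1152}\{P^{2}\Phi_{a}\}^{(8)}$ and $\frac{1}{1152}\{P^{2}\Phi_{b}\}^{(8)}$ are both equal to $\frac{P^{2}}{576}$ and cancel, which is why no pure $P^{2}$ term appears; and because each summand of $\beta$ is a virtual bundle of virtual rank $0$, the form $ch(\beta)$ has vanishing degree-$0$ part, so the term $\frac{1}{18432}\{2P^{2}\,ch(\beta)\}^{(8)}$ is identically zero and is carried along only formally, parallel to the eight-dimensional corollary of Section 3. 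Assembling the surviving terms reproduces the stated equation. The computation is entirely mechanical once Theorem 4.1 is granted; the only steps that demand care are the arithmetic $4-\frac{48}{16}=1$ collapsing the two summands and the tracking of which products persist in degree $8$ after expanding the exponential, so I anticipate no genuine obstacle beyond this bookkeeping.
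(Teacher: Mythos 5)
Your proposal is correct and is exactly the derivation the paper intends: Corollary 4.3 is stated without separate proof as the $d=2$ specialization of Theorem 4.1, using $(-1)^{d}=1$, the weights $2^{2}=4$ and $2^{-4}=\frac{1}{16}$, the collapse $4-\frac{48}{16}=1$, and the expansion $e^{\frac{1}{24}P}=1+\frac{1}{24}P+\frac{1}{1152}P^{2}$ in degree $8$. Your additional bookkeeping — the cancellation of the two $\frac{P^{2}}{576}$ terms and the observation that $ch(\beta)^{(0)}=0$ makes the $\frac{1}{18432}$ term vanish formally — is accurate and consistent with how the analogous Corollary 3.4 is presented.
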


\section{  Transgressed forms and modularities on odd-dimensional Riemannian manifolds }

In this section, $M$ be a $(4d-1)$-dimensional manifold.
Consider $\Phi_{1}(\nabla^{TM}, \nabla^{\xi}, \nabla^{\eta}, \tau)=\overline{Q_{1}}(\tau),$ $\Phi_{2}(\nabla^{TM}, \nabla^{\xi}, \nabla^{\eta}, \tau)=\overline{Q_{2}}(\tau).$
Applying the Chern-Weil theory, we can express $\Phi_{1}, \Phi_{2}$ as follows:
\begin{align}
\Phi_{1}(\nabla^{TM}, \nabla^{\xi}, \nabla^{\eta}, \tau)=&2e^{\frac{1}{24}E_{2}(\tau)[p_{1}(TM)-(a^{2}+2b^{2})p_{1}(\xi_{\mathbb{R}})]}\det\nolimits^{\frac{1}{2}}\Big(\frac{R^{TM}}{4\pi^{2}}\frac{\theta'(0, \tau)}{\theta(\frac{R^{TM}}{4\pi^{2}}, \tau)}\Big)\\
&\times\det\nolimits^{\frac{1}{2}}\Big(\frac{\theta_{1}(a\frac{R^{\xi}}{4\pi^{2}}, \tau)}{\theta_{1}(0, \tau)}\frac{\theta_{3}(b\frac{R^{\xi}}{4\pi^{2}}, \tau)}{\theta_{3}(0, \tau)}\frac{\theta_{2}(b\frac{R^{\xi}}{4\pi^{2}}, \tau)}{\theta_{2}(0, \tau)}\Big)\nonumber\\
&\times\det\nolimits^{\frac{1}{2}}\Big(\frac{\theta^{2}_{1}(0, \tau)}{\theta^{2}_{1}(\frac{R^{\eta}}{4\pi^{2}}, \tau)}\frac{\theta_{3}(\frac{R^{\eta}}{4\pi^{2}}, \tau)}{\theta_{3}(0, \tau)}\frac{\theta_{2}(\frac{R^{\eta}}{4\pi^{2}}, \tau)}{\theta_{2}(0, \tau)}\Big),\nonumber\\
\Phi_{2}(\nabla^{TM}, \nabla^{\xi}, \nabla^{\eta}, \tau)=&2e^{\frac{1}{24}E_{2}(\tau)[p_{1}(TM)-(a^{2}+2b^{2})p_{1}(\xi_{\mathbb{R}})]}\det\nolimits^{\frac{1}{2}}\Big(\frac{R^{TM}}{4\pi^{2}}\frac{\theta'(0, \tau)}{\theta(\frac{R^{TM}}{4\pi^{2}}, \tau)}\Big)\\
&\times\det\nolimits^{\frac{1}{2}}\Big(\frac{\theta_{1}(b\frac{R^{\xi}}{4\pi^{2}}, \tau)}{\theta_{1}(0, \tau)}\frac{\theta_{3}(b\frac{R^{\xi}}{4\pi^{2}}, \tau)}{\theta_{3}(0, \tau)}\frac{\theta_{2}(a\frac{R^{\xi}}{4\pi^{2}}, \tau)}{\theta_{2}(0, \tau)}\Big)\nonumber\\
&\times\det\nolimits^{\frac{1}{2}}\Big(\frac{\theta^{2}_{2}(0, \tau)}{\theta^{2}_{2}(\frac{R^{\eta}}{4\pi^{2}}, \tau)}\frac{\theta_{3}(\frac{R^{\eta}}{4\pi^{2}}, \tau)}{\theta_{3}(0, \tau)}\frac{\theta_{1}(\frac{R^{\eta}}{4\pi^{2}}, \tau)}{\theta_{1}(0, \tau)}\Big).\nonumber
\end{align}
Furthermore, we define
\begin{align}
\Phi_{3}(\nabla^{TM}, \nabla^{\xi}, \nabla^{\eta}, \tau)=&2e^{\frac{1}{24}E_{2}(\tau)[p_{1}(TM)-(a^{2}+2b^{2})p_{1}(\xi_{\mathbb{R}})]}\det\nolimits^{\frac{1}{2}}\Big(\frac{R^{TM}}{4\pi^{2}}\frac{\theta'(0, \tau)}{\theta(\frac{R^{TM}}{4\pi^{2}}, \tau)}\Big)\\
&\times\det\nolimits^{\frac{1}{2}}\Big(\frac{\theta_{1}(b\frac{R^{\xi}}{4\pi^{2}}, \tau)}{\theta_{1}(0, \tau)}\frac{\theta_{3}(a\frac{R^{\xi}}{4\pi^{2}}, \tau)}{\theta_{3}(0, \tau)}\frac{\theta_{2}(b\frac{R^{\xi}}{4\pi^{2}}, \tau)}{\theta_{2}(0, \tau)}\Big)\nonumber\\
&\times\det\nolimits^{\frac{1}{2}}\Big(\frac{\theta^{2}_{3}(0, \tau)}{\theta^{2}_{3}(\frac{R^{\eta}}{4\pi^{2}}, \tau)}\frac{\theta_{1}(\frac{R^{\eta}}{4\pi^{2}}, \tau)}{\theta_{1}(0, \tau)}\frac{\theta_{2}(\frac{R^{\eta}}{4\pi^{2}}, \tau)}{\theta_{2}(0, \tau)}\Big).\nonumber
\end{align}

Let $\nabla^{\eta}_{0}, \nabla^{\eta}_{1}$ be two Euclidean connections on $\eta,$ $A=\nabla^{\eta}_{1}-\nabla^{\eta}_{0}$ and $\nabla^{\eta}_{t}=t\nabla^{\eta}_{1}+(1-t)\nabla^{\eta}_{0}.$
By Lemma 2.3, we obtain
\begin{align}
&\Phi_{1}(\nabla^{TM}, \nabla^{\xi}, \nabla^{\eta}_{1}, \tau)-\Phi_{1}(\nabla^{TM}, \nabla^{\xi}, \nabla^{\eta}_{0}, \tau)\\
&=\frac{1}{8\pi^{2}}d\int_{0}^{1}\Phi_{1}(\nabla^{TM}, \nabla^{\xi}, \nabla^{\eta}_{t}, \tau)tr\Big[A\Big(\frac{\theta'_{2}(\frac{R_{t}^{\eta}}{4\pi^{2}},\tau)}{\theta_{2}(\frac{R_{t}^{\eta}}{4\pi^{2}}, \tau)}+\frac{\theta'_{3}(\frac{R_{t}^{\eta}}{4\pi^{2}},\tau)}{\theta_{3}(\frac{R_{t}^{\eta}}{4\pi^{2}}, \tau)}-2\frac{\theta'_{1}(\frac{R_{t}^{\eta}}{4\pi^{2}},\tau)}{\theta_{1}(\frac{R_{t}^{\eta}}{4\pi^{2}}, \tau)}\Big)\Big]dt.\nonumber
\end{align}
Then, we define
\begin{align}
&CS\Phi_{1}(\nabla^{TM}, \nabla^{\xi}, \nabla^{\eta}_{0}, \nabla^{\eta}_{1}, \tau)\\
&=\frac{1}{8\pi^{2}}\int_{0}^{1}\Phi_{1}(\nabla^{TM}, \nabla^{\xi}, \nabla^{\eta}_{t}, \tau)tr\Big[A\Big(\frac{\theta'_{2}(\frac{R_{t}^{\eta}}{4\pi^{2}},\tau)}{\theta_{2}(\frac{R_{t}^{\eta}}{4\pi^{2}}, \tau)}+\frac{\theta'_{3}(\frac{R_{t}^{\eta}}{4\pi^{2}},\tau)}{\theta_{3}(\frac{R_{t}^{\eta}}{4\pi^{2}}, \tau)}-2\frac{\theta'_{1}(\frac{R_{t}^{\eta}}{4\pi^{2}},\tau)}{\theta_{1}(\frac{R_{t}^{\eta}}{4\pi^{2}}, \tau)}\Big)\Big]dt,\nonumber
\end{align}
which is in $\Omega^{odd}(M, \mathbb{C})[[q^{\frac{1}{2}}]].$
Since $M$ is $4d-1$ dimensional, $\{CS\Phi_{1}(\nabla^{TM}, \nabla^{\xi}, \nabla^{\eta}_{0}, \nabla^{\eta}_{1}, \tau)\}^{(4d-1)}$ represents an element in $H^{4d-1}(M, \mathbb{C})[[q^{\frac{1}{2}}]].$
Similarly, we can compute the transgressed forms for $\Phi_{2}, \Phi_{3}$  and define
\begin{align}
&CS\Phi_{2}(\nabla^{TM}, \nabla^{\xi}, \nabla^{\eta}_{0}, \nabla^{\eta}_{1}, \tau)\\
&=\frac{1}{8\pi^{2}}\int_{0}^{1}\Phi_{2}(\nabla^{TM}, \nabla^{\xi}, \nabla^{\eta}_{t}, \tau)tr\Big[A\Big(\frac{\theta'_{1}(\frac{R_{t}^{\eta}}{4\pi^{2}},\tau)}{\theta_{1}(\frac{R_{t}^{\eta}}{4\pi^{2}}, \tau)}+\frac{\theta'_{3}(\frac{R_{t}^{\eta}}{4\pi^{2}},\tau)}{\theta_{3}(\frac{R_{t}^{\eta}}{4\pi^{2}}, \tau)}-2\frac{\theta'_{2}(\frac{R_{t}^{\eta}}{4\pi^{2}},\tau)}{\theta_{2}(\frac{R_{t}^{\eta}}{4\pi^{2}}, \tau)}\Big)\Big]dt,\nonumber\\
&CS\Phi_{3}(\nabla^{TM}, \nabla^{\xi}, \nabla^{\eta}_{0}, \nabla^{\eta}_{1}, \tau)\\
&=\frac{1}{8\pi^{2}}\int_{0}^{1}\Phi_{3}(\nabla^{TM}, \nabla^{\xi}, \nabla^{\eta}_{t}, \tau)tr\Big[A\Big(\frac{\theta'_{1}(\frac{R_{t}^{\eta}}{4\pi^{2}},\tau)}{\theta_{1}(\frac{R_{t}^{\eta}}{4\pi^{2}}, \tau)}+\frac{\theta'_{2}(\frac{R_{t}^{\eta}}{4\pi^{2}},\tau)}{\theta_{2}(\frac{R_{t}^{\eta}}{4\pi^{2}}, \tau)}-2\frac{\theta'_{3}(\frac{R_{t}^{\eta}}{4\pi^{2}},\tau)}{\theta_{3}(\frac{R_{t}^{\eta}}{4\pi^{2}}, \tau)}\Big)\Big]dt,\nonumber
\end{align}
which also lies in $\Omega^{odd}(M, \mathbb{C})[[q^{\frac{1}{2}}]]$ and its top component represents elements in $H^{4d-1}(M, \mathbb{C})[[q^{\frac{1}{2}}]].$
However we have the following results.

\begin{thm}
Let $M$ be a $4d-1$ dimensional manifold, $\nabla^{TM}$ be a connection on $TM,$ $\nabla^{\xi}$ be a connection on $\xi,$ $\eta$ be a two-dimensional oriented Euclidean real vector bundle with two Euclidean connections  $\nabla_{0}^{\eta}, \nabla_{1}^{\eta},$ then we have\\
1)
$\{CS\Phi_{1}(\nabla^{TM}, \nabla^{\xi}, \nabla^{\eta}_{0}, \nabla^{\eta}_{1}, \tau)\}^{(4d-1)}$
is a modular form of weight $2d$ over $\Gamma_0(2)$;\\
$\{CS\Phi_{2}(\nabla^{TM}, \nabla^{\xi}, \nabla^{\eta}_{0}, \nabla^{\eta}_{1}, \tau)\}^{(4d-1)}$
is a modular form of weight $2d$ over $\Gamma^0(2);$\\
$\{CS\Phi_{3}(\nabla^{TM}, \nabla^{\xi}, \nabla^{\eta}_{0}, \nabla^{\eta}_{1}, \tau)\}^{(4d-1)}$
is a modular form of weight $2d$ over $\Gamma_{\theta}.$\\
2) The following equalities hold,
\begin{align}
&\Big\{CS\Phi_{1}\Big(\nabla^{TM}, \nabla^{\xi}, \nabla^{\eta}_{0}, \nabla^{\eta}_{1}, -\frac{1}{\tau}\Big)\Big\}^{(4d-1)}
=\tau^{2d}\{CS\Phi_{2}(\nabla^{TM}, \nabla^{\xi}, \nabla^{\eta}_{0}, \nabla^{\eta}_{1}, \tau)\}^{(4d-1)},\nonumber\\
&\{CS\Phi_{2}(\nabla^{TM}, \nabla^{\xi}, \nabla^{\eta}_{0}, \nabla^{\eta}_{1}, \tau+1)\}^{(4d-1)}=\{CS\Phi_{3}(\nabla^{TM}, \nabla^{\xi}, \nabla^{\eta}_{0}, \nabla^{\eta}_{1}, \tau)\}^{(4d-1)}.\nonumber
\end{align}
\end{thm}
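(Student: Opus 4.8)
The plan is to deduce the modularity of the three transgressed forms from the modularity of the underlying closed forms $\Phi_1,\Phi_2,\Phi_3$ — obtained exactly as in Theorem 4.1 together with Lemmas 2.1 and 2.2 — supplemented by a single transformation lemma for the logarithmic theta-derivatives occurring in the transgression integrands, and by an elementary count of form degrees. Abbreviate the three integrand combinations by $P_1=\big(\tfrac{\theta_2'}{\theta_2}+\tfrac{\theta_3'}{\theta_3}-2\tfrac{\theta_1'}{\theta_1}\big)$, $P_2=\big(\tfrac{\theta_1'}{\theta_1}+\tfrac{\theta_3'}{\theta_3}-2\tfrac{\theta_2'}{\theta_2}\big)$ and $P_3=\big(\tfrac{\theta_1'}{\theta_1}+\tfrac{\theta_2'}{\theta_2}-2\tfrac{\theta_3'}{\theta_3}\big)$, each evaluated at $\big(\tfrac{R_t^{\eta}}{4\pi^2},\tau\big)$, so that $CS\Phi_i=\tfrac{1}{8\pi^2}\int_0^1\Phi_i(\nabla^{\eta}_t)\,tr[A\,P_i]\,dt$.

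The first step is the key lemma. Differentiating the theta transformation laws of Section 2 gives $\tfrac{\theta_j'}{\theta_j}(v,-\tfrac1\tau)=2\pi\sqrt{-1}\tau v+\tau\tfrac{\theta_{\sigma(j)}'}{\theta_{\sigma(j)}}(\tau v,\tau)$ for the permutation $\sigma=(1\,2)$; because every $P_i$ is a combination whose coefficients sum to $1+1-2=0$, the anomalous term $2\pi\sqrt{-1}\tau v$ cancels and one obtains the clean laws $P_1(v,-\tfrac1\tau)=\tau P_2(\tau v,\tau)$, $P_2(v,-\tfrac1\tau)=\tau P_1(\tau v,\tau)$, $P_3(v,-\tfrac1\tau)=\tau P_3(\tau v,\tau)$, together with $P_1(v,\tau+1)=P_1(v,\tau)$, $P_2(v,\tau+1)=P_3(v,\tau)$ and $P_3(v,\tau+1)=P_2(v,\tau)$. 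These are precisely the permutation patterns satisfied by $\Phi_1,\Phi_2,\Phi_3$ themselves.

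The second step records the companion full-form transformations. As in Theorem 4.1, the factors $(\tau/\sqrt{-1})^{1/2}$ produced by the theta functions cancel between numerators and denominators, while the exponential anomaly of the $TM$- and $\xi$-factors is absorbed by the correction $E_2(-\tfrac1\tau)=\tau^2E_2(\tau)-6\sqrt{-1}\tau/\pi$; the $\eta$-factor is anomaly-free on its own. Hence, as identities of total characteristic forms, $\Phi_1(\cdot,-\tfrac1\tau)=\Phi_2(\tau\,\cdot,\tau)$ and $\Phi_2(\cdot,\tau+1)=\Phi_3(\cdot,\tau)$, where $\tau\,\cdot$ means every curvature argument is scaled by $\tau$; in particular $\{\Phi_i(\tau\,\cdot,\tau)\}^{(2i)}=\tau^{i}\{\Phi_i(\cdot,\tau)\}^{(2i)}$. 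Substituting both steps into $CS\Phi_1(-\tfrac1\tau)$ and pairing the degree-$2i$ part of $\Phi_2$ (scaling by $\tau^i$) with the degree-$(4d-2i-1)$ part of $tr[A\,P_2]$ (scaling by $\tau\cdot\tau^{2d-i-1}=\tau^{2d-i}$, the extra $\tau$ coming from the lemma), every cross term acquires the common factor $\tau^{2d}$. This yields $\{CS\Phi_1(-\tfrac1\tau)\}^{(4d-1)}=\tau^{2d}\{CS\Phi_2(\tau)\}^{(4d-1)}$, and, since $T$ has $c=0$ and induces no curvature scaling, $\{CS\Phi_2(\tau+1)\}^{(4d-1)}=\{CS\Phi_3(\tau)\}^{(4d-1)}$; this is part 2). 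For part 1) one notes that $CS\Phi_1$ is $T$-invariant, and, since $S$ and $T$ permute $(CS\Phi_1,CS\Phi_2,CS\Phi_3)$ with the same weights as $(\Phi_1,\Phi_2,\Phi_3)$, invariance under the generators $T,ST^2ST$ of $\Gamma_0(2)$ (respectively $STS,T^2STS$ of $\Gamma^0(2)$ and $S,T^2$ of $\Gamma_\theta$) follows by composition, giving weight $2d$ modularity over the three groups.

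The routine parts are the theta-derivative identities and the verification, along the lines of Theorem 4.1, that $\Phi_3$ is modular over $\Gamma_\theta$ (the analogue of $\delta_3,\varepsilon_3$) and is $S$-fixed and $T$-exchanged with $\Phi_2$. I expect the main obstacle to be the precise weight bookkeeping in the degree-shift argument: one must check that passing from a $4d$-dimensional top form to a $(4d-1)$-dimensional transgression does not change the weight, i.e. that the single extra factor of $\tau$ carried by the $P_i$ exactly compensates the one missing unit of curvature degree, so that the weight stays $2d$ rather than dropping to $2d-1$.
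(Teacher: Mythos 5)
Your proposal is correct and follows essentially the same route as the paper's proof: that proof likewise rests on the $S$- and $T$-transformation laws of the logarithmic derivatives $\theta_j'/\theta_j$ (your lemma on the combinations $P_i$, where the $1+1-2=0$ cancellation of the anomalous term $2\pi\sqrt{-1}\tau z$ is left implicit in the paper), on the modular transformations of $\Phi_1,\Phi_2,\Phi_3$ inherited from Theorem 4.1, and on the observation that only on the top $(4d-1)$-component do the $\tau$-powers combine to the uniform weight $2d$. Your explicit degree bookkeeping and the generator-composition argument for part 1) merely spell out steps the paper compresses into ``Note that we only take $(4d-1)$-component.''
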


\begin{proof}
We denote $\frac{R_{t}^{\eta}}{4\pi^{2}}$ briefly by $z.$
It is easy to check that
\begin{align}
\frac{\theta'_{1}(z,-\frac{1}{\tau})} {\theta_{1}(z,-\frac{1}{\tau})}=2\pi\sqrt{-1}\tau z+\tau\frac{\theta'_{2}(\tau z,{\tau})} {\theta_{2}(\tau z,{\tau})},~
\frac{\theta'_{1}(z, \tau+1)} {\theta_{1}(z, \tau+1)}=\frac{\theta'_{1}(z, \tau)} {\theta_{1}(z, \tau)};\\
\frac{\theta'_{2}(z,-\frac{1}{\tau})} {\theta_{2}(z,-\frac{1}{\tau})}=2\pi\sqrt{-1}\tau z+\tau\frac{\theta'_{1}(\tau z,{\tau})} {\theta_{1}(\tau z,{\tau})},~
\frac{\theta'_{2}(z, \tau+1)} {\theta_{2}(z, \tau+1)}=\frac{\theta'_{3}(z, \tau)} {\theta_{3}(z, \tau)};\\
\frac{\theta'_{3}(z,-\frac{1}{\tau})} {\theta_{3}(z,-\frac{1}{\tau})}=2\pi\sqrt{-1}\tau z+\tau\frac{\theta'_{3}(\tau z,{\tau})} {\theta_{3}(\tau z,{\tau})},~
\frac{\theta'_{3}(z, \tau+1)} {\theta_{3}(z, \tau+1)}=\frac{\theta'_{2}(z, \tau)} {\theta_{2}(z, \tau)}.
\end{align}
Note that we only take $(4d-1)$-component, so we can get
\begin{align}
&\Big\{CS\Phi_{1}\Big(\nabla^{TM}, \nabla^{\xi}, \nabla^{\eta}_{0}, \nabla^{\eta}_{1}, -\frac{1}{\tau}\Big)\Big\}^{(4d-1)}
=\tau^{2d}\{CS\Phi_{2}(\nabla^{TM}, \nabla^{\xi}, \nabla^{\eta}_{0}, \nabla^{\eta}_{1}, \tau)\}^{(4d-1)};\\
&\{CS\Phi_{1}(\nabla^{TM}, \nabla^{\xi}, \nabla^{\eta}_{0}, \nabla^{\eta}_{1}, \tau+1)\}^{(4d-1)}=\{CS\Phi_{1}(\nabla^{TM}, \nabla^{\xi}, \nabla^{\eta}_{0}, \nabla^{\eta}_{1}, \tau)\}^{(4d-1)},\nonumber
\end{align}
\begin{align}
&\Big\{CS\Phi_{2}\Big(\nabla^{TM}, \nabla^{\xi}, \nabla^{\eta}_{0}, \nabla^{\eta}_{1}, -\frac{1}{\tau}\Big)\Big\}^{(4d-1)}
=\tau^{2d}\{CS\Phi_{1}(\nabla^{TM}, \nabla^{\xi}, \nabla^{\eta}_{0}, \nabla^{\eta}_{1}, \tau)\}^{(4d-1)};\\
&\{CS\Phi_{2}(\nabla^{TM}, \nabla^{\xi}, \nabla^{\eta}_{0}, \nabla^{\eta}_{1}, \tau+1)\}^{(4d-1)}=\{CS\Phi_{3}(\nabla^{TM}, \nabla^{\xi}, \nabla^{\eta}_{0}, \nabla^{\eta}_{1}, \tau)\}^{(4d-1)},\nonumber\\
&\Big\{CS\Phi_{3}\Big(\nabla^{TM}, \nabla^{\xi}, \nabla^{\eta}_{0}, \nabla^{\eta}_{1}, -\frac{1}{\tau}\Big)\Big\}^{(4d-1)}
=\tau^{2d}\{CS\Phi_{3}(\nabla^{TM}, \nabla^{\xi}, \nabla^{\eta}_{0}, \nabla^{\eta}_{1}, \tau)\}^{(4d-1)};\\
&\{CS\Phi_{3}(\nabla^{TM}, \nabla^{\xi}, \nabla^{\eta}_{0}, \nabla^{\eta}_{1}, \tau+1)\}^{(4d-1)}=\{CS\Phi_{2}(\nabla^{TM}, \nabla^{\xi}, \nabla^{\eta}_{0}, \nabla^{\eta}_{1}, \tau)\}^{(4d-1)}.\nonumber
\end{align}
\end{proof}
\section{ Acknowledgements}

The author was supported in part by  NSFC No.11771070. The author thanks the referee for his (or her) careful reading and helpful comments.

\vskip 1 true cm

%-----------------------------------------------------------------------------
%-----------------------------------------------------------------------------

\bigskip
\bigskip

\noindent {\footnotesize {\it S. Liu} \\
{School of Mathematics and Statistics, Changchun University of Science and Technology, Changchun 130022, China}\\
{Email: liusy719@nenu.edu.cn}

\noindent {\footnotesize {\it Y. Wang} \\
{School of Mathematics and Statistics, Northeast Normal University, Changchun 130024, China}\\
{Email: wangy581@nenu.edu.cn}

\clearpage
\section*{Statements and Declarations}

Funding: This research was funded by National Natural Science Foundation of China: No.11771070.\\

Competing Interests: The authors have no relevant financial or non-financial interests to disclose.\\

Author Contributions: All authors contributed to the study conception and design. Material preparation, data collection and analysis were performed by Siyao Liu and Yong Wang. The first draft of the manuscript was written by Siyao Liu and all authors commented on previous versions of the manuscript. All authors read and approved the final manuscript.\\

Availability of Data and Material: The datasets supporting the conclusions of this article are included within the article and its additional files.\\

\end{document}